\newtheorem{remark}{Remark}
\newtheorem{lemma}{Lemma}
\newtheorem{theorem}{Theorem}
\newtheorem{corollary}{Corollary}
\theoremstyle{definition}
\newtheorem{defi}{Definition}
\theoremstyle{definition}
\newtheorem{example}{Example}
\newtheorem{assumption}{Assumption}
\newcommand{\EE}{\mathbb{E}}
\newcommand{\R}{{\mathbb R}}
\newcommand{\Z}{{\mathbb Z}}
\newcommand{\E}{\mathbb E}
\newcommand{\Var}{\text{Var}}
\title{Variance of finite difference methods for reaction networks with non-Lipschitz rate functions}
\author{
David F. Anderson\thanks{Department of Mathematics, University of
  Wisconsin, Madison, USA.  anderson@math.wisc.edu.},
\and
Chaojie Yuan\thanks{Department of Mathematics, Indiana University Bloomington, Indiana, USA.  yuan13@iu.edu.
}}
\begin{document}
\maketitle

\begin{abstract}
Parametric sensitivity analysis is a critical component in the study of mathematical models of physical systems.  Due to its simplicity, finite difference methods are used extensively for this analysis in the study of stochastically modeled reaction networks.  Different coupling methods have been proposed to build  finite difference estimators, with the ``split coupling,''  also termed the ``stacked coupling,''  yielding the lowest variance in the vast majority of cases.  Analytical results related to this coupling are sparse, and include an analysis of the variance of the coupled processes under the assumption of globally Lipschitz intensity functions \cite{AndCFD2012}.  Because of the global Lipschitz assumption utilized in \cite{AndCFD2012}, the main result there is only applicable to a small percentage of the models found in the literature, and it was conjectured that similar results should hold for a much wider class of models.  In this paper we demonstrate this conjecture to be true by proving the variance of the coupled processes scales in the desired manner for a large class of non-Lipschitz models.   We further extend the analysis to allow for time dependence in the parameters.   In particular, binary systems with or without time-dependent rate parameters, a class of models that accounts for the vast majority of systems considered in the literature, satisfy the assumptions of our theory.  
\end{abstract}

\section{Introduction}

Stochastic models of biochemical interaction networks are now used ubiquitously in biology, especially cell biology \cite{AK2011, AK2015, elowitz2002stochastic, gillespie1976general, paulsson2004summing, raj2006stochastic, wilkinson2006stochastic}.  The most common such model treats the system as a discrete-space, continuous-time Markov chain with the state vector giving the counts of the constituent species and with transitions for the chain modeled through the possible reactions of the system.  The key system parameters of these models, the kinetic parameters, govern the rates at which the different reactions take place, and computing derivatives of  system outputs with respect to these parameters, i.e.~parametric sensitivity analysis, plays a critical role in many problems related to optimization and uncertainty quantification  \cite{anderson2007modified, AndersonKoyama2015, AndSkubak, AW2015, feng2004optimizing, komorowski2011sensitivity, SAR2013, stelling2004robustness}.  In particular, such derivatives are often utilized in an inner loop in optimization problems, and  their  estimation is one of the key bottlenecks for many computational experiments in systems biology.

Suppose our state vector at time $t$ is denoted by $X^\theta(t)$,  where $\theta$ is a vector of parameters for the model.  For simplicity, assume for now that $\theta$ is one-dimensional.  Suppose also that $f$ is a function and $f(X^\theta(t))$ is some output of  interest.   For example, if we are interested in the total molecular count, we can take $f(x) = \| x\|_1$, whereas if we are interested in  the count of the first species we can take $f(x) = x_1$. Returning to the case of a general function $f$, the sensitivity of $J(\theta)= \EE[ f(X^{\theta} (t) )]$ with respect to $\theta$ is defined  as
\begin{align}\label{eq:7696700}
	J'(\theta)   = \frac{\partial}{\partial \theta} \EE[f(X^{\theta}(t))].
\end{align}

By far the most common  method for the computational estimation of the above derivative  is  finite difference combined with Monte Carlo.  Specifically, we can estimate 
\[
	J'(\theta) \approx \frac{\EE[f(X^{\theta+\varepsilon} (t)) ]  - \EE [f (X^{\theta} (t)  )]}{\varepsilon}  = \EE \left[ \frac{f(X^{\theta+\varepsilon} (t))- f( X^{\theta} (t))}{\varepsilon} \right],
\]
via
\begin{align*}
\EE \left[ \frac{f(X^{\theta+\varepsilon} (t))- f( X^{\theta} (t))}{\varepsilon} \right] \approx D_N(\varepsilon) = \frac{1}{N\varepsilon }\sum_{i=1}^N \left[ f(X_{[ i ]}^{\theta + \varepsilon} (t))- f(X_{[ i ]}^{\theta} (t))\right],
\end{align*}
where $X_{[ i ]}^{\theta}$ denotes the $i^{th}$ realization of $X^{\theta}$. The variance of this estimator is 
\begin{align*}
\Var \left ( D_N (\varepsilon) \right )  & = \frac{1}{N\varepsilon^2} \Var\left(f(X^{\theta+\varepsilon} (t))- f(X^{\theta} (t))\right),
\end{align*}
so long as the pairs $\{(X_{[i]}^{\theta+\varepsilon},X_{[i]}^\theta)\}_{i = 1}^N$ are independent.
For a given $\varepsilon$, there are two ways to decrease the variance of the estimator:
\begin{enumerate}[(i)]
\item increase $N$, or
\item  decrease $\Var\left(f(X^{\theta+\varepsilon} (t))- f(X^{\theta} (t))\right)$.  
\end{enumerate}
Note that if $X^\theta$ and $X^{\theta + \varepsilon}$ are constructed independently, then the variance  is simply \cite{ASV2017}
\begin{equation*}
	\Var\left(f(X^{\theta+\varepsilon} (t))\right) +\Var \left(f(X^{\theta} (t))\right).
\end{equation*}
However, if the processes are \textit{coupled} so that they are constructed on the same probability space but not independent, then
\begin{align*}
 \Var\left(f(X^{\theta+\varepsilon} (t))- f(X^{\theta} (t))\right) &= \Var\left(f(X^{\theta+\varepsilon} (t))\right) +\Var \left(f(X^{\theta} (t))\right) \\
 &\hspace{.2in}- 2\text{Cov}(f(X^{\theta+\varepsilon}(t)), f(X^\theta(t))),
\end{align*}
and it is possible to decrease the variance with little or no extra computational burden.

The two most commonly used couplings for parametric sensitivity analysis of reaction networks are the  \textit{common reaction path coupling} \cite{Khammash2010} and \textit{coupled finite difference}  (CFD)\cite{AndCFD2012}, also termed the \textit{split coupling}, both of which rely on the random time change representation of Kurtz \cite{Kurtz80}. In more recent work, the \textit{stacked coupling} was introduced  \cite{anderson2018low}.  This coupling is similar to the CFD coupling, but utilizes thinning and is more amenable to models with time dependent rate parameters.   A  construction similar to the stacked coupling, and which was developed independently and simultaneously, can be found in \cite{thanh2017efficient}. 

Unfortunately, analytical results related to these couplings are sparse.  In \cite{AndCFD2012}, it was shown that  $\Var\left(f(X^{\theta+\varepsilon} (t))- f(X^{\theta} (t))\right)$ is $O(\varepsilon)$ if the CFD coupling is used and if the intensity functions of the model are globally Lipschitz in both the state variable and the parameter $\theta$.  However, these assumptions are satisfied only if the model stays inside a bounded set (for example, if a conservation relation exists for all species). \textit{Therefore, these assumptions exclude most models found in the literature}.   Similar analytic results related to the stability and convergence of coupled processes were given in section 4 of Engblom's 2014 paper \cite{Engblom2014}.

Our main analytical result, Theorem \ref{mainthm1}, extends the main  result from \cite{AndCFD2012} to a much broader class of models, including all binary networks, which constitutes the vast majority of models utilized in the literature.   Specifically, Theorem \ref{mainthm1} guarantees  that for any model satisfying Assumption \ref{assumpMain}, which are mild regularity conditions satisfied by nearly all models found in the literature, $\E\left[\|X^{\theta+\varepsilon} (t) - X^{\theta} (t)\|_1^p\right] = O(\varepsilon)$, for any $p \ge 1$.  In particular, the main growth condition we assume a model satisfies is that any reaction that increases the net total number of molecules has a rate function that grows linearly.  We note that this condition was also considered in \cite{Engblom2014, GBK2014, gupta2013unbiased, Rathinam2013}.

\textcolor{black}{Finally, we note that finite differences is not the only method for estimating the derivatives in \eqref{eq:7696700}.  In particular, it is often possible to move the derivative inside the expectation and perform a change of measure.  This leads to a Girsanov transformation method \cite{Plyasunov2007,Khammash2010}.  However, in the present setting of reaction networks this method typically has a very large variance, and so is not often utilized. Further,  it is sometimes possible to switch the order of differentiation and expectation in \eqref{eq:7696700} directly, and this leads to pathwise differentiation methods \cite{Glasserman1990}.  However, this method of differentiation is generally not applicable in the present setting since there are commonly ``interruptions'' in the reactions \cite{AW2015}.  However, see \cite{AW2015} for a hybrid pathwise differentiation method that is applicable to the vast majority of models in the literature.}

The remainder of the paper is organized as follows.  In  section \ref{preliminaries}, we will provide  required terminology, the formal mathematical model, and the assumptions on the model we require for our main theorem to hold.  In section \ref{mainresult}, we state and prove our main result, Theorem \ref{mainthm1}.

\section{Mathematical model}\label{preliminaries}

\subsection{Notation}
We use standard notation.   The nonnegative integers, real numbers, and positive real numbers will  be represented by $\Z_{\ge0}$, $\R$, and $\R_{>0}$,  respectively. For $d \ge 2$, we denote by $\Z_{\ge0}^d$, $\R^d$, and  $\R_{>0}^d$ the corresponding $d$-dimensional vector spaces. For  $x\in \Z_{\ge0}^d$ the factorial $x!$ is defined
\begin{align}\label{compfact}
x! = \prod_{i=1}^d x_i !.
\end{align}
For $u,v \in \R^d$, we write $u \ge v$ if $u_i \ge v_i$ for each $i$, we define the inner product as $u\cdot v = \sum_{i=1}^d u_i v_i$, and the $\ell_1$  norm of $u$ is 
\begin{align*}
\| u \|_1 = \sum_{i=1}^d |u_i |.
\end{align*}
For  $A \subset \R$ and $x\in \R$ we denote the indicator function $1_A(x)$  by
\begin{align*}
1_{A} (x) = \begin{cases}
1,& x\in A\\
0,& \text{ otherwise. }
\end{cases}
\end{align*}

\subsection{Reaction networks and stochastic mass-action kinetics}
\label{sec:terminology}
We begin with the definition of a reaction network.  See \cite{AK2015} for any necessary background.

\begin{defi}
A reaction networks is a triple  $\{\mathcal{S},\mathcal{C},\mathcal{R}\}$ where
\begin{itemize}
\item $\mathcal{S} = \{A_1,\dots,A_d\}$ is a set of \emph{species}.
\item $\mathcal{C}$, the \emph{complexes},  is a non-empty, finite set of linear combinations of the species over the non-negative integers.  We make the usual abuse of notation by  corresponding $y\in \mathcal{C}$ with the vector in $\Z^d_{\ge 0}$ whose $i$th component gives the integer count of species $A_i$ in complex $y$.
\item $\mathcal{R}$, the \emph{reactions}, is a  subset of $\mathcal{C} \times \mathcal{C}$.  We typically denote $(y,y')\in \mathcal{R}$ by $y \to y'$ and will denote by $K$ the cardinality of $\mathcal{R}$.  

After enumerating the reactions, we denote the $k^{th}$ reaction by $y_k \to y_k' \in \mathcal{R}$,  we denote its \textit{reaction vector} by $\zeta_k = y_k'- y_k \in \Z^d$, and we will call $y_k$ and $y_k^\prime$ the source and product complexes for the $k$th reaction, respectively. 
\end{itemize}
\end{defi}

The typical stochastic model of a reaction network treats the system as a continuous-time Markov chain with state space $\Z^d_{\ge 0}$,  where $X(t)\in \Z^d_{\ge 0}$ gives the counts of the different chemical species at time $t\ge 0$. Transitions for the model are determined by the reactions and their intensity functions. Specifically, for the $k^{th}$ reaction, we let $\lambda_k: \Z^d \times [0,\infty) \rightarrow \R_{\ge 0}$  be the corresponding intensity function.  Then the transition rate from state $x$ to $x'$ is
\begin{align}\label{TransitionRates}
Q(x,x',t) = \sum_{k: \zeta_k = x' - x} \lambda_k(x,t),
\end{align}
where the sum is over those reactions with reaction vector equal to $x'-x \in \Z^d_{\ge 0}$.
The most common choice of intensity function is given by stochastic mass-action kinetics, in which case
\begin{align}\label{stochasticMA}
\lambda_k(x)  = \kappa_{k}  \frac{x!}{(x- y_k)! } 1_{\{x\geq y_k\}},
\end{align}
where $y_k$ is the source complex.  Because this is the most common choice of rate function, reactions with source complex of the form $\emptyset$ are termed \textit{zeroth order reactions} and reactions with source complex $y_k = e_i$ for some $i \in \{1,\dots,d\}$ are termed \textit{first order reactions}.

Note that mass-action kinetics does not  depend upon time, which is why the $t$ dependence has been dropped.
When stochastic mass-action kinetics is used, the term $\kappa_k$ is called a \textit{rate constant}, and is typically placed next to the reaction arrow in the reaction diagram (see Examples \ref{example:first},  \ref{example:dimer}, and \ref{example:second} below). 

The most commonly used time-dependent  intensity function  is only a slight modification of \eqref{stochasticMA}, and assumes that the rate parameters are functions of time,
\begin{equation}\label{TimeDependentMassAction}
	\lambda_k(x,t) = \kappa_k(t) \frac{x!}{(x-y_k)!} 1_{\{x\ge y_k\}}.
\end{equation}

A few examples illustrate the model.

\begin{example}\label{example:first}
Consider the following reaction network with stochastic mass-action kinetics
\begin{align*}
	A &\overset{2}\to  2A \qquad 2A \overset{\theta}\to A.
\end{align*}
Because we are assuming mass-action kinetics, the intensity functions are
\[
	\lambda_1(x) = 2x \quad \text{and}\quad \lambda_2(x)= \theta x (x-1), \quad \text{for}\quad x \in \Z_{\ge 0}.
\]
Neither intensity is a function of time,  so there is no $t$-dependence in the notation.
\hfill $\square$ 
\end{example}

\begin{example}[Intracellular viral kinetics]
Consider the following model given in \cite{gupta2013unbiased}. This model consists of 4 species: the viral template (T), the viral genome (G), the viral structural protein (S), and the virus (V). There are 6 reactions
\begin{align*}
	T &\overset{k_1}\to  T+G \qquad G \overset{k_2}\to  T \qquad T \overset{k_3}\to  T+ S \qquad T \overset{k_4}\to \emptyset \qquad S \overset{k_5}\to \emptyset \qquad G+S \overset{k_6}\to V.
\end{align*}
Assuming mass-action kinetics, and after ordering the species as $T,G,S,V$, we have
\[
\lambda_1(x) = \kappa_1 x_1,\hspace{.051in} \lambda_2(x) = \kappa_2 x_2, \hspace{.051in} \lambda_3(x) = \kappa_3 x_1, \hspace{.051in} \lambda_4(x) = \kappa_4 x_1, \hspace{.051in} \lambda_5(x) = \kappa_5 x_3, \hspace{.051in} \lambda_6(x)= \kappa_6 x_2x_3.
\]
Notice that $G$ and $S$ can be created with the presence of only one copy of a viral template.  Hence, the state space will be unbounded. 
\hfill $\square$ \label{example:dimer}
\end{example}

The following example  incorporates circadian rhythm into our intensity function by assuming time dependence of a rate parameter. 

\begin{example}\label{example:second}
Consider the following standard network for gene transcription and translation,
\begin{align*}
	\emptyset &\overset{\lambda_1(t)}\to  M  \qquad	M \overset{100}\to  M + P\qquad M \overset{1}{\to} \emptyset \qquad P \overset{1}{\to} \emptyset.
\end{align*}
We assume the last three reactions have mass-action kinetics \eqref{stochasticMA} with associated rate constants given above their reaction arrow.  However, we assume the intensity function for the first reaction follows \eqref{stochasticMA}.  In particular,
\[
	\lambda_1(t)= \kappa_1(t) = 60+15\sin\left(\tfrac{2\pi t}{24}\right).
\]
Using such an intensity function for the first reaction allows us to model the system going through dark-light cycles that oscillate over a 24 hour time-period.\hfill $\square$ 
\end{example}

\subsection{Representations for the process}
We provide two representations for the process detailed in the previous section.   See \cite{AK2011,AK2015} for more details on either. The first is the random time change representation of Thomas Kurtz \cite{Kurtz80}, in which $X$ is the solution to the following stochastic equation
\begin{align}\label{RTC}
X(t) = X(0) + \sum_{k=1}^K Y_k \left(\int_0^t \lambda_k(X(s),s) ds \right) \zeta_k,
\end{align}
where the $\{Y_k\}_{k=1}^K$ are independent unit-rate Poisson processes.

The second  representation arrises by letting $X$ be the solution to the following equation
\begin{align}\label{PPP}
X(t) = X(0) + \sum_{k=1}^K\zeta_k \int_{[0,t]\times [0,\infty)}  1_{[q_{k-1}(s-) , q_{k-1} (s-)+ \lambda_k(X(s-),s-) )} (x)  N(ds\times dx) ,
\end{align}
where $N$ is a unit-rate space-time Poisson point process and $q_0(s) = 0$ for all $s \ge 0$ and for $k \in \{1,\dots,K\}$,
\begin{align*}
q_{k} (s)   = \sum_{\ell =1}^k \lambda_{\ell} (X(s),s).
\end{align*}

\subsection{Coupled processes and assumptions on the model}

We detail a number of couplings already found in the literature.

 The first coupling of  $(X^{\theta}, X^{\theta+\varepsilon})$ we detail, usually termed the \textit{common reaction path coupling} \cite{Khammash2010}, arises from using the same choice of Poisson processes in the representation \eqref{RTC}.  Specifically, 
\begin{align}\label{crp}
\begin{split}
X^{\theta+\varepsilon}(t) &= X^{\theta+\varepsilon}(0) + \sum_{k=1}^K Y_k \left(\int_0^t \lambda_k^{\theta+\varepsilon}(X^{\theta+\varepsilon}(s),s) ds \right) \zeta_k \\
X^{\theta}(t) &= X^{\theta}(0)+ \sum_{k=1}^K Y_k \left(\int_0^t \lambda_k^{\theta}(X^{\theta}(s),s) ds \right) \zeta_k, 
\end{split}
\end{align}
where $\{Y_k\}_{k=1}^K$ is a sequence of independent unit-rate Poisson processes.

The second coupling of  $(X^{\theta}, X^{\theta+\varepsilon})$ we detail, termed the \textit{stacked coupling} \cite{anderson2018low},  utilizes a space-time Poisson point process as its randomness, as in \eqref{PPP}. Specifically, 
\begin{align}\label{stackedcoupling}
\begin{split}
X^{\theta+\varepsilon}(t) & = X^{\theta+\varepsilon}(0) + \sum_{k=1}^K \zeta_k \int_{[0,t] \times [0,\infty)} 1_{ [q_{k-1}^{\theta,\varepsilon}(s-) , q_{k-1}^{\theta,\varepsilon} (s-)+ \lambda_k^{\theta+\varepsilon}(X^{\theta+\varepsilon}(s-),s-) )} (x) N(ds\times dx) \\
X^{\theta} (t) & = X^{\theta} (0) + \sum_{k=1}^K \zeta_k \int_{[0,t] \times [0,\infty)} 1_{ [q_{k-1}^{\theta,\varepsilon}(s-) , q_{k-1}^{\theta,\varepsilon} (s-)+ \lambda_k^{\theta}(X^{\theta}(s-),s-) )} (x) N(ds\times dx),
\end{split}
\end{align}
where 
\begin{align}\label{notations}
\begin{split}
\overline{\lambda}^{\, \theta,\varepsilon}_{\ell} (s)  & = \max \{ \lambda_\ell^{\theta+\varepsilon}\left (X^{\theta+\varepsilon}(s),s \right) , \lambda_\ell^{\theta}\left (X^{\theta}(s),s \right) \} \\
q_{k}^{\theta,\varepsilon} (s)  & = \sum_{\ell=1}^k \overline{\lambda}^{\, \theta,\varepsilon}_{\ell} (s) \quad \quad \text{      with }   \quad \quad  q_0^{\theta,\varepsilon}(s) = 0.
\end{split}
\end{align}

\textcolor{black}{
\begin{remark}\label{Re:stackedcoupling}
One key advantage of the stacked coupling is that whenever both processes jump at the same time, they must jump via the same reaction channel. That is, when the $k$th reaction channel is chosen to fire for $X^{\theta}$,  then $X^{\theta+\varepsilon}$ can either update through the $k$th reaction channel or stay put, and vice versa. 
\end{remark}}

For future reference, we define
\begin{align}\label{simulationrep}
N_{q_K}^{\theta,\varepsilon}(t) &= \int_{[0,t] \times [0,\infty)} 1_{ [0, q_{K}^{\theta,\varepsilon} (s-) )} (x) N(ds\times dx),
\end{align}
to be the number of points of the point process $N$ in our region of interest up to time $t$.

If the intensity functions are independent of time, then the stacked coupling \eqref{stackedcoupling} is equivalent to the coupled finite difference coupling (CFD), also termed the split coupling \cite{AndCFD2012},
\begin{align}\label{eq:CFD}
\begin{split}
X^{\theta+\varepsilon}(t) =   X^{\theta+\varepsilon}(0) &+ \sum_{k=1}^K \zeta_k Y_k^1 \left (\int_0^t \lambda_k^{\theta+\varepsilon} (X^{\theta+\varepsilon}(s),s) \wedge\lambda_k^{\theta} (X^{\theta}(s),s)  ds \right) \\
& + \sum_{k=1}^K \zeta_k Y_k^2 \left(\int_0^t \left( \lambda_k^{\theta+\varepsilon}(X^{\theta+\varepsilon}(s),s)  - \lambda_k^{\theta+\varepsilon} (X^{\theta+\varepsilon}(s),s) \wedge\lambda_k^{\theta} (X^{\theta}(s),s) \right)  ds \right) \\
X^{\theta}(t)  =   X^{\theta}(0) &+ \sum_{k=1}^K \zeta_k Y_k^1 \left (\int_0^t \lambda_k^{\theta+\varepsilon} (X^{\theta+\varepsilon}(s),s) \wedge\lambda_k^{\theta} (X^{\theta}(s),s)  ds \right) \\
& + \sum_{k=1}^K \zeta_k Y_k^3 \left(\int_0^t \left(  \lambda_k^{\theta} (X^{\theta}(s),s)- \lambda_k^{\theta+\varepsilon} (X^{\theta+\varepsilon}(s),s) \wedge\lambda_k^{\theta} (X^{\theta}(s),s) \right)  ds \right),
\end{split}
\end{align}
where $\{Y_{k}^1,Y_k^2, Y_k^3\}_{k = 1}^K$ are independent unit-rate Poisson processes.

As noted in the introduction, analytical results related to these couplings are sparse.  However, the following is proven in \cite{AndCFD2012}.

\begin{theorem}\label{thm:CFDvariance}
Let $( \mathcal{S},\mathcal{C}, \mathcal{R})$ be a reaction network with $d$ species and $K$ reactions. Let  $\{ X^{\theta} \}$ be a family of stochastic models whose intensity functions $\lambda_k^{\theta}(x,t)$ are parametrized by $\theta\in \R_{>0}^m$. Suppose $(X^{\theta+\varepsilon}, X^{\theta})$ are coupled as in \eqref{eq:CFD} and there exist constants $K_1, K_2$ such that for all $x,y\in \Z_{\ge 0}^d$
\begin{align}\label{ass:CFDvariance1}
\sum_{k=1}^K \left|\lambda_k^{\theta}(x) - \lambda_k^{\theta}(y) \right | \leq K_1 |x-y| 
\end{align}
and for all $x\in  \Z_{\ge 0}^d$, and all $\varepsilon <1 $ 
\begin{align}\label{ass:CFDvariance2}
\sum_{k=1}^K \left|\lambda_k^{\theta+\varepsilon}(x) - \lambda_k^{\theta}(x) \right | \leq K_2 \varepsilon. 
\end{align}
Then, for any $T> 0$ and any function $f: \R^d \to \R$ that is $C^1$ (bounded first derivative),  there is a  $C_{T,f}>0$ for which  
\begin{align*}
\EE \left[\sup_{t\leq T} \left( f(X^{\theta+\varepsilon}(t)) - f(X^{\theta}(t))\right)^2 \right] \leq C_{T,f} \varepsilon.
\end{align*}
\end{theorem}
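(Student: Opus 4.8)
The plan is to reduce the statement about $f$ to a bound on the difference process $D(t) \eqdef X^{\theta+\varepsilon}(t) - X^{\theta}(t)$ and then to control its first and second moments by exploiting the structure of the coupling \eqref{eq:CFD}. Since $f$ is $C^1$ with bounded first derivative it is globally Lipschitz, so $|f(X^{\theta+\varepsilon}(t)) - f(X^{\theta}(t))| \le L_f \|D(t)\|_1$ for a constant $L_f$, and it suffices to prove $\EE[\sup_{t\le T}\|D(t)\|_1^2] = O(\varepsilon)$. The essential observation is that in \eqref{eq:CFD} the two processes share the Poisson processes $Y_k^1$ driven by the common intensity $\lambda_k^{\theta+\varepsilon}\wedge\lambda_k^\theta$; these shared jumps move $X^{\theta+\varepsilon}$ and $X^{\theta}$ by the same $\zeta_k$ and hence leave $D$ unchanged. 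Thus $D$ only changes through the ``discrepancy'' processes $Y_k^2$ and $Y_k^3$, whose intensities along the coupled trajectories are $(\lambda_k^{\theta+\varepsilon}-\lambda_k^\theta)^+$ and $(\lambda_k^\theta-\lambda_k^{\theta+\varepsilon})^+$, respectively.

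Next I would introduce the total number of discrepancy jumps $V(t) = \sum_k [Y_k^2(\cdots) + Y_k^3(\cdots)]$, a non-decreasing counting process, and record the pointwise bound $\|D(t)\|_1 \le \|D(0)\|_1 + C_\zeta V(t)$ with $C_\zeta = \max_k\|\zeta_k\|_1$. The key rate estimate comes from the triangle inequality together with the two hypotheses: writing $\lambda_k^{\theta+\varepsilon}(X^{\theta+\varepsilon})-\lambda_k^\theta(X^\theta)$ as $[\lambda_k^{\theta+\varepsilon}(X^{\theta+\varepsilon})-\lambda_k^{\theta}(X^{\theta+\varepsilon})] + [\lambda_k^{\theta}(X^{\theta+\varepsilon})-\lambda_k^\theta(X^\theta)]$ and summing over $k$, the parameter-regularity assumption \eqref{ass:CFDvariance2} bounds the first group by $K_2\varepsilon$ and the Lipschitz assumption \eqref{ass:CFDvariance1} bounds the second by $K_1\|D(s)\|_1$. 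Hence the instantaneous total discrepancy rate $\mu(s)$ satisfies $\mu(s) \le K_1\|D(s)\|_1 + K_2\varepsilon$. Taking $D(0)=0$, so that $\EE[V(t)]$ equals the expected compensator, this yields the closed inequality $\EE[\|D(t)\|_1] \le C_\zeta(K_1\int_0^t \EE[\|D(s)\|_1]\,ds + K_2\varepsilon t)$, and Gronwall's inequality gives the first-moment bound $\EE[\|D(t)\|_1] = O(\varepsilon)$ uniformly on $[0,T]$.

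For the second moment I would use the martingale decomposition of the counting process $V = M + A$, where $A(t) = \int_0^t \mu(s)\,ds$ is the compensator and $M = V - A$ is a martingale with $\EE[M(t)^2] = \EE[A(t)]$. From $\EE[V(t)^2] \le 2\EE[A(t)] + 2\EE[A(t)^2]$, the first term is already $O(\varepsilon)$ by the first-moment analysis, while Cauchy--Schwarz applied to $A(t)^2 \le (K_1\int_0^t\|D(s)\|_1\,ds + K_2\varepsilon t)^2$ produces a term of order $\int_0^t \EE[\|D(s)\|_1^2]\,ds$ plus a term of order $\varepsilon^2 \le \varepsilon$. Setting $g(t)=\EE[\|D(t)\|_1^2] \le 2C_\zeta^2\EE[V(t)^2]$, this assembles into a Gronwall inequality of the form $g(t) \le O(\varepsilon) + c\,T\int_0^t g(s)\,ds$ on $[0,T]$, whence $g(t)=O(\varepsilon)$. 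Finally, because $V$ is non-decreasing we have $\sup_{t\le T}\|D(t)\|_1^2 \le 2C_\zeta^2 V(T)^2$, so the supremum is controlled by the very same estimate for $\EE[V(T)^2]$ without invoking Doob's inequality, giving $\EE[\sup_{t\le T}\|D(t)\|_1^2] = O(\varepsilon)$ and hence the claim.

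I expect the main obstacle to be the second-moment step. Both moment bounds are self-referential---the discrepancy rate at time $s$ depends on the very quantity $\|D(s)\|_1$ being estimated---so each requires a Gronwall argument, but the second moment is more delicate because squaring the compensator introduces the quadratic term $\EE[A(t)^2]$ and forces a careful separation of the purely deterministic $O(\varepsilon^2)$ contribution from the genuinely stochastic fluctuation captured by $\EE[M(t)^2]=\EE[A(t)]=O(\varepsilon)$. It is precisely this Poisson fluctuation term, rather than the mean drift, that makes the variance scale like $\varepsilon$ and not $\varepsilon^2$, so getting the bookkeeping right there is the crux of the argument.
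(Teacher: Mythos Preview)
Your argument is correct and is essentially the standard one: pass from $f$ to the difference process via the Lipschitz bound, observe that in the coupling \eqref{eq:CFD} only the auxiliary counting processes $Y_k^2,Y_k^3$ move $D$, bound their total intensity by $K_2\varepsilon + K_1\|D(s)\|_1$ via the triangle inequality and the two hypotheses, run Gronwall for the first moment, then use the martingale--compensator decomposition of the discrepancy counting process to bootstrap to the second moment and conclude via the monotonicity of $V$. The bookkeeping in the second-moment step is right, including the crucial identity $\EE[M(t)^2]=\EE[A(t)]$ that pins the scaling at $\varepsilon$ rather than $\varepsilon^2$.

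There is, however, no proof in this paper to compare against: Theorem~\ref{thm:CFDvariance} is quoted from \cite{AndCFD2012} and not re-proved here. The present paper's contribution is Theorem~\ref{mainthm1}, whose proof follows a genuinely different strategy (stopping-time tail bounds on $\|X^\theta\|_1$, finiteness of all moments of the total jump count $N_{q_K}^{\theta,\varepsilon}(t)$, and a decoupling-probability argument using condition~\eqref{coupling} of Assumption~\ref{assumpMain}) precisely because the Lipschitz bounds \eqref{ass:CFDvariance1}--\eqref{ass:CFDvariance2} that drive your Gronwall argument are not available in the non-Lipschitz setting. Your outline is the right proof for the theorem as stated and matches the approach of the original reference; the only implicit assumption you should make explicit is $X^{\theta+\varepsilon}(0)=X^{\theta}(0)$ so that $D(0)=0$.
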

Notice that \eqref{ass:CFDvariance1} implies the intensity functions are globally Lipschitz in the state variable and \eqref{ass:CFDvariance2} implies the intensity functions are globally Lipschitz in the parameter, these assumptions are highly restrictive and most models in the literature are excluded. 

\begin{example}
Consider Example \ref{example:first} and recall the reaction intensity for the second reaction is
\[
	\lambda_2(x,t)= \theta x (x-1).
\]
Notice that the function is only locally Lipschitz in $x$ and $\theta$ (since the state space is not bounded), thus Theorem \ref{thm:CFDvariance} is not  applicable.
\hfill $\square$ 
\end{example}

\begin{example}
Consider the intracellular viral kinetics model introduced in Example \ref{example:dimer}. Notice that the last reaction is binary, hence $\lambda_6(x)=\kappa_6 x_2x_3$ is only locally Lipschitz in $x$. Moreover,  the viral genome (G) and the viral structural protein (S) can be created by the first and third reaction without consuming any other species. Hence $x_2$ and $x_3$ can not be bounded and $\lambda_2, \lambda_5, \lambda_6$ are only locally Lipschitz in terms of rate constants. Thus, Theorem \ref{thm:CFDvariance} is not  applicable. 
\hfill $\square$ \label{example:dimer222}
\end{example}

We will extend Theorem \ref{thm:CFDvariance} to a wider class of models.  However, care must be taken not to try to extend too broadly.  For example, models such as $2A \to 3A$ are explosive, and their expectation and variance are not finite.   With this example in mind, we denote by $\mathcal{P} = \{k\in \{1,\dots,K\}: \zeta_k\cdot \vec{1} >0\}$,  the indices of the reactions that have a positive net effect on the count of the total population, and give our main modeling assumption below.

\begin{assumption}\label{assumpMain}
A family of rate functions, $\{\lambda_k^\theta\}$, parameterized by $\theta \in \R^\gamma_{>0}$  satisfies this assumption if there is an $\overline C>0$ and a compact set $\Theta \subset \R^\gamma_{>0}$ for which the following conditions hold.
\begin{enumerate}[(1)]

\item\label{assump1} (\textbf{Linear growth for $\mathcal{P}$})  For any $k\in \mathcal{P}$ 
\begin{align*}
	\sup_{\theta \in \Theta} \sup_{t \in \R_{>0}} \lambda_k^{\theta} (x,t)  \leq \overline C (1+\| x \|_1).
\end{align*}

\item\label{growth}  (\textbf{Polynomial growth for $\mathcal{R}$}) For any $k\in \{1,2,...,K\} $, there is an integer $p \geq 1$ for which 
\begin{align*}
	\sup_{\theta \in  \Theta} \sup_{t \in \R_{>0}}  \lambda_k^{\theta} (x,t)  \leq \overline C (1+\| x \|_1^p)
\end{align*}

\item \label{coupling} For $\|\varepsilon\|_1> 0$ small enough, 
\begin{align*}
\sup_{\theta \in \Theta} \sup_{x\in \Z_{\ge 0}^d}\frac{\sum_{k=1}^K |\lambda_k^{\theta+\varepsilon}(x,s)-\lambda_k^{\theta}(x,s) |}{\sum_{k=1}^K \max\{\lambda_k^{\theta+\varepsilon}(x,s),\lambda_k^{\theta}(x,s)\}} \leq \overline C \|\varepsilon\|_1.
\end{align*}
\end{enumerate}
\end{assumption}

\begin{remark}\label{assump1_noset}
We will sometimes work with sets of rate functions with a particular choice of rate constants, and these will be denoted by $\{\lambda_k\}$.  In this case, the set $\Theta$ is taken to be a single point and the criterion $\sup_{\theta\in\Theta}$ can be dropped from each of the three conditions.
\end{remark}

We   note that  condition (1) in Assumption \ref{assumpMain} was also utilized in \cite{Engblom2014, GBK2014, gupta2013unbiased, Rathinam2013}.

The next lemma shows that stochastic mass-action kinetics \eqref{stochasticMA} satisfies conditions  (\ref{growth}) and (\ref{coupling})  of Assumption \ref{assumpMain}.    Hence, any model with this choice of kinetics satisfies Assumption \ref{assumpMain} if only first or zeroth order reactions generate a net gain in total molecule counts.  \textcolor{black}{In particular, binary systems, a class of models that accounts for the vast majority of systems considered in the literature, satisfy this assumption, since a reaction with a binary source complex will either maintain or decrease total molecule counts when it occurs.   For an example of a model that does \textit{not} satisfy condition (1) when mass-action kinetics is utilized, simply consider $2A \rightleftarrows 3A$. In particular, the reaction $2A \to 3A$ violates the assumption.}

\begin{lemma}\label{massactionassump}
Let $( \mathcal{S},\mathcal{C}, \mathcal{R})$ be a reaction network with $d$ species and $K$ reactions.  Assume the intensity functions are given by stochastic mass-action kinetics \eqref{stochasticMA}.  Let $\Theta \subset \R^K_{>0}$ be a compact set, where we correspond each $\theta = (\kappa_1, \kappa_2,..., \kappa_K)\in \Theta$ with a choice of mass-action rate constants. Then, conditions (\ref{growth}) and (\ref{coupling}) of Assumption \ref{assumpMain} are  satisfied. 
\end{lemma}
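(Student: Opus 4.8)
The plan is to exploit the one structural feature of stochastic mass-action kinetics \eqref{stochasticMA} that makes everything work: the parameter enters only as a multiplicative constant. Writing $\theta = (\kappa_1,\dots,\kappa_K)$ and
\[
 m_k(x) \eqdef \frac{x!}{(x-y_k)!}\,1_{\{x\ge y_k\}} \ge 0,
\]
every rate function factors as $\lambda_k^{\theta}(x) = \kappa_k\, m_k(x)$, where the state-dependent factor $m_k$ is independent of both $\theta$ and $t$ (so the suprema over $t$ in Assumption~\ref{assumpMain} are vacuous). Since $\Theta\subset \R^K_{>0}$ is compact, the constants $\overline\kappa \eqdef \max_k \sup_{\theta\in\Theta}\kappa_k$ and $\kappa_{\min}\eqdef \min_k \inf_{\theta\in\Theta}\kappa_k$ are, respectively, finite and strictly positive; these two numbers will carry the entire argument.

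For condition (\ref{growth}) I would simply bound the monomial factor. For each species $i$ the $i$th factor of $m_k(x)$ is the falling factorial $x_i(x_i-1)\cdots(x_i-y_{k,i}+1)\le x_i^{\,y_{k,i}}$, and since $x_i \le \|x\|_1$ this gives $m_k(x)\le \|x\|_1^{\|y_k\|_1}\le 1+\|x\|_1^{p}$ with $p \eqdef \max_k\|y_k\|_1$ (the maximal reaction order); the ``$1+$'' absorbs the low-count regime, e.g.\ zeroth-order reactions. Taking $\overline C \ge \overline\kappa$ then yields $\sup_{\theta\in\Theta}\lambda_k^\theta(x)=\overline\kappa\,m_k(x)\le \overline C(1+\|x\|_1^p)$, which is exactly (\ref{growth}).

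The substance of the lemma is condition (\ref{coupling}), and the key point is a termwise cancellation of $m_k$. With $\varepsilon = (\varepsilon_1,\dots,\varepsilon_K)$, the factorization gives, for each $k$,
\[
 |\lambda_k^{\theta+\varepsilon}(x)-\lambda_k^{\theta}(x)| = |\varepsilon_k|\, m_k(x)
 \quad\text{and}\quad
 \max\{\lambda_k^{\theta+\varepsilon}(x),\lambda_k^{\theta}(x)\}=\max\{\kappa_k+\varepsilon_k,\kappa_k\}\,m_k(x) \ge \kappa_{\min}\, m_k(x).
\]
Using $|\varepsilon_k|\le \|\varepsilon\|_1$ in the numerator and $\kappa_k\ge\kappa_{\min}$ in the denominator, the common nonnegative factor $m_k(x)$ cancels after summing over $k$, bounding the ratio by $\|\varepsilon\|_1/\kappa_{\min}$ and giving (\ref{coupling}) with $\overline C \ge 1/\kappa_{\min}$. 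I would flag two small caveats: I restrict to $\|\varepsilon\|_1 < \kappa_{\min}$ so that $\theta+\varepsilon\in\R^K_{>0}$ is still a legitimate parameter (this is what ``small enough'' provides), and the degenerate states $x$ with $m_k(x)=0$ for every $k$ produce a $0/0$ that I interpret as $0$ (equivalently, the supremum runs over states where the denominator is positive). There is no genuine obstacle here; the only thing to \emph{recognize} — and the reason the conclusion survives even though mass-action rates are merely locally Lipschitz and polynomially unbounded — is that the parameter dependence is purely multiplicative, so the unbounded state factor $m_k(x)$ appears identically in numerator and denominator and cancels, with compactness of $\Theta$ needed only to keep $\kappa_{\min}$ away from $0$ and $\overline\kappa$ finite.
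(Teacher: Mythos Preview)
Your proof is correct and follows essentially the same approach as the paper's: both arguments bound the falling factorial by $\|x\|_1^{\|y_k\|_1}$ for condition~(\ref{growth}), and both exploit the multiplicative factorization $\lambda_k^\theta(x)=\kappa_k m_k(x)$ so that the state-dependent factor $m_k(x)$ cancels in the ratio for condition~(\ref{coupling}). The only cosmetic difference is that the paper bounds the ratio of sums by $\max_k |\varepsilon_k|/\max\{\kappa_k+\varepsilon_k,\kappa_k\}$ via the elementary inequality $a_k\le c b_k \Rightarrow \sum a_k/\sum b_k \le c$, whereas you bound numerator and denominator separately to get $\|\varepsilon\|_1/\kappa_{\min}$; both routes rely on compactness of $\Theta$ in the same way.
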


\begin{proof}
Condition \eqref{growth} of Assumption \ref{assumpMain} follows easily since we have  
\begin{align*}
\lambda_k(x)  &= \kappa_{k} \frac{x!}{(x- y_k)! } 1_{\{x\geq y_k\}} \leq \kappa_k \prod_{i=1}^d \left[ x_i (x_i-1) \ldots (x_i - y_{ki} +1) \right] \leq \kappa_k \prod_{i=1}^d x_i^{y_{ki}} \leq \kappa_k \|x\|_1^{\|y_k\|_1},
\end{align*}
where we take $0^0 = 1$, and since $\theta \in \Theta$, which is compact.

To verify condition \eqref{coupling} of  Assumption \ref{assumpMain}, notice that 
\begin{align*}
|\lambda_k^{\theta+\varepsilon}(x,s)-\lambda_k^{\theta}(x,s)|  & = |\varepsilon_k| \frac{x!}{(x- y_k)! } 1_{\{x\geq y_k\}}, \quad \text{and} \\
 \max\{\lambda_k^{\theta+\varepsilon}(x,s),\lambda_k^{\theta}(x,s) \} & = \max \{\kappa_k + \varepsilon_k, \kappa_k\}\frac{x!}{(x- y_k)! } 1_{\{x\geq y_k\}},
\end{align*}
where we choose $\varepsilon_k$  small enough so that $\kappa_k + \varepsilon_k >0$.
Hence 
\begin{align*}
\max_{x\in \Z_{\ge 0}^d}&\frac{\sum_{k=1}^K |\lambda_k^{\theta+\varepsilon}(x,s)-\lambda_k^{\theta}(x,s) |}{\sum_{k=1}^K \max\{\lambda_k^{\theta+\varepsilon}(x,s),\lambda_k^{\theta}(x,s)\}} 
 = \max_{x\in \Z_{\ge 0}^d} \frac{\sum_{k=1}^K |\varepsilon_k| \frac{x!}{(x- y_k)! } 1_{\{x\geq y_k\}} }{\sum_{k=1}^K  \max \{\kappa_k + \varepsilon_k, \kappa_k\}\frac{x!}{(x- y_k)! } 1_{\{x\geq y_k\}} } \\ 
&\hspace{1.2in} \leq \max_{x\in \Z_{\ge 0}^d} \max_k \left \{ \frac{|\varepsilon_k|}{\max \{\kappa_k + \varepsilon_k, \kappa_k\}} \right\} =  \max_k \left \{ \frac{|\varepsilon_k|}{\max \{\kappa_k + \varepsilon_k, \kappa_k\}} \right\}.
\end{align*}
For the last inequality, we used the fact that
\begin{align*}
a_k\leq c b_k \qquad \Rightarrow \qquad \frac{\sum_{k=1}^K a_k}{\sum_{k=1}^K b_k} \leq c \frac{\sum_{k=1}^K b_k}{\sum_{k=1}^K b_k} = c.
\end{align*}
Hence, again by the compactness of $\Theta \in \R^K_{>0}$, condition \eqref{coupling} of  Assumption \ref{assumpMain} holds so long as we choose $\|\varepsilon\|_1$ small enough, .
\end{proof}

\section{Main results}\label{mainresult}

In this section, we prove our main result, stated below.

\begin{theorem}\label{mainthm1}
Let $( \mathcal{S},\mathcal{C}, \mathcal{R})$ be a reaction network with $d$ species and $K$ reactions. Let  $\{ X^{\theta} \}$ be a family of stochastic models whose associated intensity functions $\{\lambda_k^{\theta}\}$ are parametrized by $\theta\in \Theta \subset \R_{>0}^\gamma$,  for some $\gamma \in \Z_{>0}$, where $\Theta$ is compact.  Suppose that Assumption \ref{assumpMain} holds and that $(X^{\theta}, X^{\theta+\varepsilon})$ are coupled via the stacked coupling (\ref{stackedcoupling}) with $X^{\theta}(0) = X^{\theta + \varepsilon}(0) = x_0 \in \Z^d_{\ge 0}$. Then, for any $\theta$ in the interior of  $\Theta$  and any  $r\ge 1$, there is a $C_{r,t}>0$  and $\overline \varepsilon \in \R_{> 0}$ so that 
\begin{equation}\label{eq:mainbound}
	\EE \left[\| X^{\theta+\varepsilon} (t) -X^{\theta}(t) \|_1^r \right] \leq C_{r,t} \| \varepsilon\|_1,
	\end{equation}
when $ \| \varepsilon \|_1 \le \overline \varepsilon$.
\end{theorem}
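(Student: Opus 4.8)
The plan is to reduce the claim to two ingredients: uniform moment bounds on the two individual processes, and a Gronwall-type estimate for the difference $D^\varepsilon(t) \eqdef X^{\theta+\varepsilon}(t) - X^{\theta}(t)$ driven by the rate at which the stacked coupling produces ``uncoupled'' firings. First I would establish that for every $q \ge 1$ and $T>0$ there is a constant with $\sup_{\theta\in\Theta}\EE[\sup_{s\le T}\|X^{\theta}(s)\|_1^q] < \infty$, and likewise for $\theta+\varepsilon$ with $\|\varepsilon\|_1$ small. This is where condition \eqref{assump1} of Assumption \ref{assumpMain} enters: since only reactions in $\mathcal{P}$ raise the total molecular count and those have linear growth, a Dynkin/Gronwall argument applied to $x \mapsto \|x\|_1^q$ yields the moment bounds uniformly over the compact set $\Theta$.

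Next I would exploit the structure of \eqref{stackedcoupling}: because the two indicator strips for reaction $k$ share the left endpoint $q_{k-1}^{\theta,\varepsilon}(s-)$, a point of $N$ moves both processes by the same $\zeta_k$ except on a set of Lebesgue measure $|\lambda_k^{\theta+\varepsilon}(X^{\theta+\varepsilon}(s),s) - \lambda_k^{\theta}(X^\theta(s),s)|$. Hence $D^\varepsilon$ is a pure-jump process generated at instantaneous rate $\Lambda(s) \eqdef \sum_{k=1}^K |\lambda_k^{\theta+\varepsilon}(X^{\theta+\varepsilon}(s),s) - \lambda_k^{\theta}(X^{\theta}(s),s)|$, each jump changing $\|D^\varepsilon\|_1$ by at most $\max_k\|\zeta_k\|_1$. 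Writing Dynkin's formula for $\|D^\varepsilon(t)\|_1^r$ and using that a single jump changes $\|z\|_1^r$ by at most $C(1+\|z\|_1^{r-1})$ gives
\[
	\EE[\|D^\varepsilon(t)\|_1^r] \le C\int_0^t \EE[\Lambda(s)(1+\|D^\varepsilon(s)\|_1^{r-1})]\,ds.
\]
Since $D^\varepsilon(s)$ takes values in $\Z^d$, the quantity $\|D^\varepsilon(s)\|_1$ is a nonnegative integer, so every lower power is dominated by $\|D^\varepsilon(s)\|_1^r$; I use this to collapse the mixed powers arising below to a single power $r$.

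I would then split $\Lambda(s)$ at the point $(X^{\theta+\varepsilon}(s),X^\theta(s))$ into a parameter part and a state part,
\[
	\Lambda(s) \le \textstyle\sum_{k}|\lambda_k^{\theta+\varepsilon}(X^{\theta+\varepsilon}(s),s)-\lambda_k^{\theta}(X^{\theta+\varepsilon}(s),s)| + \sum_{k}|\lambda_k^{\theta}(X^{\theta+\varepsilon}(s),s)-\lambda_k^{\theta}(X^{\theta}(s),s)|.
\]
By condition \eqref{coupling} together with the polynomial growth \eqref{growth}, the parameter part is at most $\overline C^{\,2}\|\varepsilon\|_1(1+\|X^{\theta+\varepsilon}(s)\|_1^p)$; combined with the moment bounds, its contribution to the integral is genuinely $O(\|\varepsilon\|_1)$ and serves as the forcing term. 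The state part vanishes whenever the two processes agree, and for $k\in\mathcal{P}$ the linear growth in \eqref{assump1} gives a bounded Lipschitz constant, producing a term $\le C\int_0^t\EE[\|D^\varepsilon(s)\|_1^r]\,ds$ that Gronwall's lemma absorbs.

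The main obstacle is the state part from the high-order reactions $k\notin\mathcal{P}$: there $\lambda_k^{\theta}$ is only locally Lipschitz, so this contribution has the form $\int_0^t \EE[\,\mathrm{poly}(X(s))\,\|D^\varepsilon(s)\|_1^r\,]\,ds$ with a polynomially growing prefactor, and a naive H\"older split decouples the prefactor only at the cost of a strictly higher moment of $D^\varepsilon$, destroying the linear dependence on $\|\varepsilon\|_1$. Closing the argument therefore seems to require retaining the \emph{sign} of the generator on these reactions rather than merely its absolute value: a population-decreasing high-order reaction fires for whichever process has the larger intensity and so tends to push the two processes back together, giving a favorable (restoring) contribution. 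Equivalently, because a decoupling can originate only from the rare, $O(\|\varepsilon\|_1)$-rate parameter discrepancies that occur while $D^\varepsilon=0$, one expects the excursions of $D^\varepsilon$ away from $0$ to be $O(\|\varepsilon\|_1)$-rare with tails making every moment simultaneously $O(\|\varepsilon\|_1)$. Once the high-order state contribution is recast as $C\int_0^t\EE[\|D^\varepsilon(s)\|_1^r]\,ds$ plus an $O(\|\varepsilon\|_1)$ remainder, Gronwall's inequality delivers \eqref{eq:mainbound} with a constant $C_{r,t}$.
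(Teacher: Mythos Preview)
Your proposal has a genuine gap precisely at the point you yourself flag as ``the main obstacle.'' The state part of $\Lambda(s)$ coming from reactions with $\|y_k\|_1\ge 2$ contributes, after Dynkin, a term of the form $\int_0^t \EE[(1+\|X(s)\|_1^{p-1})\,\|D^\varepsilon(s)\|_1^r]\,ds$, and there is no way to absorb the random, polynomially growing prefactor into a Gronwall kernel without passing to a higher moment of $D^\varepsilon$ via H\"older, which destroys the $O(\|\varepsilon\|_1)$ scaling. Your first proposed fix---that reactions with $\zeta_k\cdot\vec 1\le 0$ ``restore'' the processes---does not hold at the level of $\|D^\varepsilon\|_1$: a reaction like $A+B\to C$ firing for the process with larger $x_1x_2$ need not decrease $\|D^\varepsilon\|_1$ at all. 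Your second proposed fix---that decouplings are $O(\|\varepsilon\|_1)$-rare---is the correct intuition, but you do not turn it into an estimate, and the final sentence reverts to the Gronwall scheme that cannot close.

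The paper abandons the Gronwall route entirely and instead exploits the \emph{ratio} structure of condition \eqref{coupling} directly. Let $N_{q_K}^{\theta,\varepsilon}(t)$ be the total number of points of the driving Poisson process in the stacked region up to time $t$, and let $\beta_{\theta,\varepsilon}$ be the index of the first jump at which the two processes disagree. Conditionally on $N_{q_K}^{\theta,\varepsilon}(t)=n$ and on the processes still being coupled at step $i-1$, the probability that step $i$ decouples them is exactly the ratio appearing in condition \eqref{coupling}, hence at most $\overline C\|\varepsilon\|_1$; multiplying over $i$ gives $P(\beta_{\theta,\varepsilon}\le n\mid N_{q_K}^{\theta,\varepsilon}(t)=n)\le n\overline C\|\varepsilon\|_1$. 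On the complementary event $D^\varepsilon(t)=0$, while on $\{\beta_{\theta,\varepsilon}\le n\}$ one has the crude bound $\|D^\varepsilon(t)\|_1\le \ell n$. Summing yields $\EE[\|D^\varepsilon(t)\|_1^r]\le \overline C\ell^r\|\varepsilon\|_1\,\EE[(N_{q_K}^{\theta,\varepsilon}(t))^{r+1}]$, and the last moment is shown to be finite by combining the exponential hitting-time bound (condition \eqref{assump1}) with the polynomial growth (condition \eqref{growth}). The point you are missing is that condition \eqref{coupling} should be read as a uniform bound on the \emph{per-jump} probability of decoupling, not (after multiplying through by the denominator and bounding it with \eqref{growth}) as an absolute rate bound; the latter throws away exactly the structure that makes the argument work without Lipschitz control on the state.
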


The condition that $\theta$ be in the interior of $\Theta$ is not vital.  In particular, if $\theta$ were on the boundary of $\Theta$, then a larger $\widetilde \Theta \supset \Theta$ could be chosen.  \textcolor{black}{Note also that the choice of $\overline \varepsilon$ can be used to guarantee that $\theta + \varepsilon \in \Theta$.}

Note that an immediate corollary of Theorem \ref{mainthm1}, acquired by taking $r = 2$, is the following.
\begin{corollary}\label{maincor}
Let $( \mathcal{S},\mathcal{C}, \mathcal{R})$ be a reaction network with $d$ species and $K$ reactions. Let  $\{ X^{\theta} \}$ be a family of stochastic models whose associated intensity functions $\{\lambda_k^{\theta}\}$ are parametrized by $
\theta\in \R_{>0}^\gamma$,  for some $\gamma \in \Z_{>0}$.  Suppose that Assumption \ref{assumpMain} holds and that $(X^{\theta}, X^{\theta+\varepsilon})$ are coupled via the stacked coupling (\ref{stackedcoupling}) with $X^{\theta}(0) = X^{\theta + \varepsilon}(0) = x_0 \in \Z^d_{\ge 0}$.    Finally, let $f: \R^d \to \R$ be  Lipschitz.  Then there is a $C_{T,f}>0$ and  a $\overline \varepsilon \in \R_{> 0}$ so that  
$$
Var \left( f(X^{\theta+\varepsilon} (t)) - f(X^{\theta}(t)) \right) \leq C_{T,f} \| \varepsilon\|_1, 
$$
when $ \| \varepsilon \|_1 \le \overline \varepsilon$.
\end{corollary}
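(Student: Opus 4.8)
Granting Theorem \ref{mainthm1}, the corollary is immediate, so the plan is to record that reduction and then devote the bulk of the effort to sketching the theorem, where all the work lies. For the reduction, let $L$ be a Lipschitz constant for $f$, so that $|f(X^{\theta+\varepsilon}(t))-f(X^\theta(t))|\le L\,\|X^{\theta+\varepsilon}(t)-X^\theta(t)\|_1$ pointwise. Writing $W=f(X^{\theta+\varepsilon}(t))-f(X^\theta(t))$ and using $\Var(W)\le \EE[W^2]$ gives $\Var(W)\le L^2\,\EE[\|X^{\theta+\varepsilon}(t)-X^\theta(t)\|_1^2]$, and Theorem \ref{mainthm1} with $r=2$ bounds the right-hand side by $L^2 C_{2,t}\|\varepsilon\|_1$ for $\|\varepsilon\|_1\le\overline\varepsilon$; taking $C_{T,f}=L^2 C_{2,t}$ finishes the proof. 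The only point of care is that Theorem \ref{mainthm1} requires $\theta$ in the interior of the compact set $\Theta$ of Assumption \ref{assumpMain}, so for a given $\theta\in\R^\gamma_{>0}$ one first enlarges $\Theta$ to place $\theta$ in its interior, exactly as the comment following Theorem \ref{mainthm1} allows.

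The substance is therefore Theorem \ref{mainthm1}. The structural observation driving the proof is that in the stacked coupling \eqref{stackedcoupling} both processes read $N$ off intervals with the \emph{same} lower endpoint $q_{k-1}^{\theta,\varepsilon}(s-)$: a point in the common overlap $[\,q_{k-1}^{\theta,\varepsilon},\,q_{k-1}^{\theta,\varepsilon}+\lambda_k^{\theta+\varepsilon}\wedge\lambda_k^\theta\,)$ fires reaction $k$ in both processes and leaves the difference $D(t):=X^{\theta+\varepsilon}(t)-X^\theta(t)$ unchanged, whereas only a point in the thin sliver of width $|\lambda_k^{\theta+\varepsilon}(X^{\theta+\varepsilon}(s),s)-\lambda_k^\theta(X^\theta(s),s)|$ changes $D$, and then by exactly $\pm\zeta_k$. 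Hence $\|D(t)\|_1\le(\max_k\|\zeta_k\|_1)\,Z(t)$, where $Z$ counts the discrepancy firings and is a counting process with stochastic intensity $\Lambda(s)=\sum_{k=1}^K|\lambda_k^{\theta+\varepsilon}(X^{\theta+\varepsilon}(s),s)-\lambda_k^\theta(X^\theta(s),s)|$, so it suffices to show $\EE[Z(t)^r]\le C_{r,t}\|\varepsilon\|_1$.

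I would control $\Lambda$ by splitting each summand through the common state $X^{\theta+\varepsilon}(s)$ into a parameter part $|\lambda_k^{\theta+\varepsilon}(X^{\theta+\varepsilon},s)-\lambda_k^\theta(X^{\theta+\varepsilon},s)|$ and a state part $|\lambda_k^\theta(X^{\theta+\varepsilon},s)-\lambda_k^\theta(X^\theta,s)|$. Summed over $k$, the parameter part is at most $\overline C\|\varepsilon\|_1\sum_k\max\{\lambda_k^{\theta+\varepsilon},\lambda_k^\theta\}(X^{\theta+\varepsilon},s)$ by Assumption \ref{assumpMain}(\ref{coupling}), which by the polynomial growth bound (\ref{growth}) is $\|\varepsilon\|_1$ times a polynomial in $\|X^{\theta+\varepsilon}(s)\|_1$. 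The state part is where the merely local Lipschitz nature of the rates enters: for polynomial (mass-action) intensities one has $|\lambda_k^\theta(y,s)-\lambda_k^\theta(x,s)|\le P_k(\|x\|_1,\|y\|_1)\,\|y-x\|_1$ with $P_k$ polynomial, so this part is a polynomial in the two states times $\|D(s)\|_1$. Thus $\Lambda(s)$ decomposes into a \emph{source} term of order $\|\varepsilon\|_1$ (times a bounded-moment polynomial in the states) and a \emph{feedback} term proportional to $\|D(s)\|_1$. To make the polynomial weights harmless I would first establish uniform all-order moment bounds $\sup_{\theta\in\Theta}\sup_{s\le t}\EE[\|X^\theta(s)\|_1^m]<\infty$ for every $m\ge 1$: Assumption \ref{assumpMain}(\ref{assump1}) says only reactions in $\mathcal P$ raise the total count and do so at a linearly bounded rate, so $\|X^\theta(\cdot)\|_1$ is stochastically dominated by a linear birth--immigration process, whose moments are finite on $[0,t]$ uniformly over the compact $\Theta$ and small $\varepsilon$.

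The argument then closes by a Gronwall/moment estimate: taking $r$-th moments in the representation of $Z$ (equivalently $D$), compensating the counting integral and bounding its martingale part by a Burkholder--Davis--Gundy inequality, the source term supplies the decisive factor $\|\varepsilon\|_1$, while the feedback term, being itself of order $\|\varepsilon\|_1$ to leading order, must be absorbed. This last absorption is the main obstacle, because the feedback contributes a quantity of the form $\EE[\,\mathrm{Poly}(X(s))\,\|D(s)\|_1^r\,]$ in which the unbounded polynomial weight and the difference are correlated and cannot simply be factored; a naive Gronwall after truncating at $\tau_M=\inf\{s:\|X^\theta(s)\|_1\vee\|X^{\theta+\varepsilon}(s)\|_1>M\}$ produces a constant growing with $M$. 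I would resolve this either by separating the two factors with H\"older's inequality---paying with a higher power of $\|D\|_1$ and then inducting on $r$, using the all-order moment bounds to keep the polynomial factors finite---or by combining truncation with the moment bounds so that excursions of the states to large values contribute only negligibly. The decisive feature, true for every $r$, is that the bound is linear rather than higher order in $\|\varepsilon\|_1$, reflecting the heuristic that a discrepancy ever occurs with probability only $O(\|\varepsilon\|_1)$, so that every moment of $D(t)$ inherits the same linear scaling, which is \eqref{eq:mainbound}.
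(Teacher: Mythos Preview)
Your reduction of the corollary to Theorem~\ref{mainthm1} with $r=2$ is exactly what the paper does (it calls the corollary ``immediate'' for this reason), and your handling of the Lipschitz constant and the interior/enlargement remark is fine.

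Where you diverge is in the sketch of Theorem~\ref{mainthm1} itself, and there the paper takes a cleaner route that sidesteps precisely the obstacle you identify. You split $\Lambda(s)$ into a parameter part and a state part $|\lambda_k^\theta(X^{\theta+\varepsilon},s)-\lambda_k^\theta(X^\theta,s)|$, and then must absorb a feedback term carrying a polynomial-in-state weight; you correctly flag this as the hard step and do not fully close it. The paper avoids the state part altogether by arguing pathwise on the jump skeleton: let $N_{q_K}^{\theta,\varepsilon}(t)$ be the total number of points seen (as in \eqref{simulationrep}) and let $\beta_{\theta,\varepsilon}$ be the index of the first jump at which $X^\theta$ and $X^{\theta+\varepsilon}$ differ. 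On $\{\beta_{\theta,\varepsilon}>i-1\}$ the two processes are in the \emph{same} state at step $i-1$, so the conditional probability of decoupling at the $i$th step is exactly the ratio in Assumption~\ref{assumpMain}(\ref{coupling}) evaluated at a single $x$, hence $\le \overline C\|\varepsilon\|_1$. Multiplying over $i$ gives $P(\beta_{\theta,\varepsilon}\le n\mid N_{q_K}^{\theta,\varepsilon}(t)=n)\le 1-(1-\overline C\|\varepsilon\|_1)^n\le \overline C n\|\varepsilon\|_1$. On $\{N_{q_K}^{\theta,\varepsilon}(t)=n\}$ one has the crude bound $\|X^{\theta+\varepsilon}(t)-X^\theta(t)\|_1\le \ell n$, so the whole expectation is controlled by $\overline C\ell^r\|\varepsilon\|_1\,\EE[(N_{q_K}^{\theta,\varepsilon}(t))^{r+1}]$, which is finite by a separate moment lemma proved from conditions~(\ref{assump1}) and~(\ref{growth}) via an exponential exit-time bound.

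Two consequences are worth noting. First, your approach needs a local-Lipschitz-in-state estimate $|\lambda_k^\theta(y,s)-\lambda_k^\theta(x,s)|\le P_k(\|x\|_1,\|y\|_1)\|y-x\|_1$, which is \emph{not} part of Assumption~\ref{assumpMain}; the paper's argument uses only the ratio condition~(\ref{coupling}) at a common state and never compares intensities at distinct states. Second, because the paper extracts the factor $\|\varepsilon\|_1$ from a decoupling \emph{probability} rather than from a Gronwall iteration, there is no feedback term to absorb and no H\"older/induction machinery is needed---the linear-in-$\|\varepsilon\|_1$ scaling for every $r$ drops out directly, matching the heuristic you state at the end.
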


The rest of the section is organized as follows. In section \ref{geometricgrowthbound}, we will discuss the key component of the proof, which establishes an upper bound for the rate of  growth  of the process. In section \ref{morelemmas}, we will then introduce some technical lemmas.  The pieces will then be put together in section \ref{proofmain}.

\subsection{Growth bound of the stochastic process}\label{geometricgrowthbound}
Let $\tau_m$ be the first time the process leaves the $L^1$ ball with radius $m$.  We are interested in the decay rate of $P(\tau_m \leq t)$, as $m \to \infty$, and will show in Lemma \ref{boundednessAssumption} that it decays exponentially so long as the model satisfies  condition \eqref{assump1} of Assumption \ref{assumpMain}.  Moreover, by Lemma \ref{lemma3}, the bound is sharp.   We note that Lemma \ref{boundednessAssumption} is similar to a result in \cite{gupta2013unbiased}, where, under the same assumptions, the decay was shown to be polynomial in $m$.

\begin{lemma}\label{boundednessAssumption}
Let $( \mathcal{S},\mathcal{C}, \mathcal{R})$ be a reaction network with $d$ species and $K$ reactions.    Let  $\{ X^{\theta} \}$ be a family of stochastic models whose associated intensity functions $\{\lambda_k^{\theta}\}$ are parametrized by $\theta\in \Theta \subset \R_{>0}^\gamma$,  for some $\gamma \in \Z_{>0}$, where $\Theta$ is compact.   
Assume $\{\lambda_k^\theta\}$ satisfies condition \eqref{assump1} of Assumption \ref{assumpMain}. 
 Define
 \[
	 \tau_m^\theta = \inf \left\{t\geq 0 : \| X^\theta(t)\|_1 = \sum_{i=1}^d X_i^\theta(t)  \geq m \right\}.
\]
Then, for any initial condition $x_0\in \Z_{\ge 0}^d$\textcolor{black}{, any $\theta \in \Theta$, and any $t\geq 0$}, there exist constants $C >0 $ and $\delta  \in (0,1)$, both of which are independent of $\theta$, such that 
\begin{align}\label{probgeo}
	P(\tau_m^\theta \leq t) \leq C \delta^m, 
\end{align}
for all $m \in \Z_{> 0}$ large enough. 
\end{lemma}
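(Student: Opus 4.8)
The plan is to reduce the $d$-dimensional problem to a one-dimensional first-passage problem for the total population $R(t) \eqdef \|X^\theta(t)\|_1$, and then exploit the fact that $R$ is dominated by a linear birth process, whose time-$t$ marginal has geometric tails. First I would observe that $R$ can increase only through reactions in $\mathcal{P}$: since the state stays in $\Z_{\ge 0}^d$, firing reaction $k$ changes $R$ by $\zeta_k\cdot\vec{1}$, which is $\le 0$ for $k\notin\mathcal{P}$ and at most $L \eqdef \max_{k\in\mathcal{P}}\zeta_k\cdot\vec{1}$ for $k\in\mathcal{P}$. Hence, writing $N(t)$ for the total number of $\mathcal{P}$-reactions fired in $[0,t]$, we have the pathwise bound $R(t)\le \|x_0\|_1 + L\,N(t)$. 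Because $\{\tau_m^\theta\le t\} = \{\sup_{s\le t}R(s)\ge m\}$ and $N$ is nondecreasing, this gives
\[
P(\tau_m^\theta\le t)\ \le\ P\!\left(N(t)\ge \tfrac{m-\|x_0\|_1}{L}\right),
\]
so it suffices to prove a geometric tail bound for $N(t)$, uniform over $\theta\in\Theta$.

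Next I would dominate $N$ by a scalar Markov birth process. Using condition \eqref{assump1} of Assumption \ref{assumpMain} together with $R(s)\le\|x_0\|_1+L\,N(s)$, the aggregate intensity of the $\mathcal{P}$-reactions is bounded pathwise by
\[
\sum_{k\in\mathcal{P}}\lambda_k^\theta(X^\theta(s),s)\ \le\ \overline C\,|\mathcal{P}|\,(1+R(s))\ \le\ a + b\,N(s),
\]
with $a=\overline C|\mathcal{P}|(1+\|x_0\|_1)$ and $b=\overline C|\mathcal{P}|L$, both independent of $\theta$. Since the intensity of $N$ is bounded by an affine, nondecreasing function of $N$ itself, I would build on the same space-time Poisson process $N(ds\times dx)$ used in \eqref{PPP} a pure-birth process $\bar N$ with rate $a+b\bar N$ and $\bar N(0)=0$, arranged so that every point triggering a $\mathcal{P}$-reaction for $X^\theta$ also produces a jump of $\bar N$; this yields the pathwise domination $N(s)\le\bar N(s)$ for all $s$ (a standard stochastic comparison for counting processes). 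Consequently $P(N(t)\ge n)\le P(\bar N(t)\ge n)=P(T_n\le t)$, where $T_n=\sum_{j=0}^{n-1}\xi_j$ is the time for $\bar N$ to reach level $n$ and the $\xi_j$ are independent with $\xi_j\sim\mathrm{Exp}(a+bj)$.

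The heart of the argument --- and the step that upgrades the polynomial decay of \cite{gupta2013unbiased} to a geometric one --- is the tail estimate for $T_n$. Here the linear growth of the rates is essential: $\EE[T_n]=\sum_{j=0}^{n-1}(a+bj)^{-1}\sim b^{-1}\log n\to\infty$, while $\Var(T_n)=\sum_{j=0}^{n-1}(a+bj)^{-2}$ stays bounded, so $\{T_n\le t\}$ is a large-deviation event whose probability decays geometrically in $n$. Quantitatively, for every $\lambda>0$ the Chernoff bound gives
\[
P(T_n\le t)\ \le\ e^{\lambda t}\prod_{j=0}^{n-1}\frac{a+bj}{a+bj+\lambda};
\]
optimizing $\lambda$ (which one must allow to grow with $n$, since a fixed $\lambda$ yields only polynomial decay) produces a bound of the form $C\delta^n$ with $\delta\in(0,1)$ depending on $t$ but not on $n$ or $\theta$. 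Substituting $n=\lceil (m-\|x_0\|_1)/L\rceil$ into $P(\tau_m^\theta\le t)\le C\delta^n$ and relabeling constants yields \eqref{probgeo}.

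I expect the only genuinely delicate points to be this Chernoff optimization and the verification that $\delta$ and $C$ can be taken uniform over the compact set $\Theta$; the latter follows since $\overline C$, and hence $a$ and $b$, are $\theta$-independent by Assumption \ref{assumpMain}, so the dominating birth process and its tail bound do not see $\theta$ at all.
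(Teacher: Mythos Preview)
Your overall strategy coincides with the paper's: both arguments dominate $\|X^\theta(t)\|_1$ by a one-dimensional linear birth process (via the same pathwise inequality $\|X^\theta(t)\|_1 \le \|x_0\|_1 + \ell\sum_{k\in\mathcal P}Y_k\!\left(\overline C\int_0^t(1+\|X^\theta(s)\|_1)\,ds\right)$ that you reach through $R(t)\le\|x_0\|_1+L\,N(t)$), and then bound the first-passage probability of that birth process. The coupling/comparison step and the uniformity in $\theta$ are handled the same way.

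The genuine difference is in how the tail bound for the birth process is obtained. The paper does \emph{not} use a Chernoff argument: it computes the cumulative distribution function of $T_n$ exactly. For the basic process $X(t)=1+Y\!\left(\kappa\int_0^t X(s)\,ds\right)$ it writes $T_M=\sum_{i=1}^{M-1}e_i/i$ (hypoexponential), evaluates $P(T_M\le t)$ via the known density, and collapses the resulting alternating sum with the binomial theorem to get the closed form $P(T_M\le t)=(1-e^{-\kappa t})^{M-1}$. The extensions to general $x_0$, jump size $\ell$, and the ``$1+X$'' rate then follow by elementary rescaling and shifting (the paper's Lemmas \ref{lemmalemma} and \ref{lemma5}). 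This route is shorter, gives the sharp constant $\delta=1-e^{-\kappa t}$, and avoids any asymptotic analysis. Your Chernoff route is also valid---with $\lambda\asymp n$ one indeed recovers geometric decay, and in fact the optimized bound reproduces $\delta=1-e^{-bt}$ asymptotically---but, as you note, the optimization is the delicate step: it requires a Stirling-type or Riemann-sum estimate on $\prod_{j}\frac{a+bj}{a+bj+\lambda}$ that you have not carried out, whereas the paper's exact computation sidesteps this entirely. What your approach buys is robustness (it would apply to rate sequences that are merely linearly bounded rather than exactly linear), at the cost of a messier calculation.
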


In the proof, we will show that one possible choice of $C$ and $\delta$ is 
\begin{equation}
C = (1-e^{-c t \overline CK})^{-1-\frac{\|x_0\|_1}{\ell}} \text{              and         }  \delta= \left(1-e^{-c t \overline CK} \right)^{\frac{1}{\ell}},
\end{equation}
where $\overline C$ comes from Assumption \ref{assumpMain}, $c = \max \{ \|x_0\|_1+1,\ell\}$ and $\ell  = \max_k\{ \zeta_k \cdot \vec 1\}$.  
 By Remark \ref{assump1_noset}, we may apply Lemma \ref{boundednessAssumption} when only a particular model (with a particular choice of rate constants) is being considered.  In this case, we take $\Theta$ to be a single point.

We prove Lemma \ref{boundednessAssumption} at the end of the section.  The proof will proceed by comparing the model of interest with a particular linear (i.e., first order) model.  We therefore begin with a sequence of lemmas related to linear models.  The first concerns a pure birth-process with a particular choice of rate constants.

\begin{lemma} \label{lemma3}
Suppose $X$ satisfies
\begin{align*}
X(t) & = 1 + Y \left( \kappa \int_0^t X(s)ds \right),
\end{align*}
where $Y$ is a unit-rate Poisson process and $\kappa>0$. For any integer $M \ge 2$, define $\tau_M =  \inf \{t\geq 0: X(t)  \geq M \}$. 
Then 
\[
P(\tau_M \leq t) =  (1-e^{-\kappa t})^{M-1}.
\]
\end{lemma}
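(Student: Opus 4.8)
The plan is to recognize $X$ as a linear pure birth process (a Yule process). The driving Poisson process $Y$ increases $X$ by exactly one at each of its jumps, so $X$ is nondecreasing and integer valued, and while $X(s) = n$ its instantaneous jump rate is $\kappa X(s) = \kappa n$. Because the paths are nondecreasing, the level $M$ is reached by time $t$ if and only if $X(t) \ge M$; hence $P(\tau_M \le t) = P(X(t) \ge M)$, and it suffices to identify the law of $X(t)$ and to sum its upper tail.

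The key step is the distributional identity. I would first establish that $X(t)$ is geometrically distributed,
\[
	p_n(t) := P(X(t) = n) = e^{-\kappa t}\left(1 - e^{-\kappa t}\right)^{n-1}, \qquad n \ge 1.
\]
One route is through the forward (master) equations for the chain,
\[
	\frac{d}{dt} p_n(t) = \kappa(n-1)\,p_{n-1}(t) - \kappa n\, p_n(t), \qquad p_n(0) = 1_{\{n = 1\}},
\]
verifying by induction on $n$ that the claimed geometric form solves the system; the base case $n = 1$ gives $p_1(t) = e^{-\kappa t}$, and the inductive step is a single linear ODE solved with the integrating factor $e^{\kappa n t}$. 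A more transparent route avoids the ODE entirely: the holding time in state $n$ is exponential with rate $\kappa n$, and the successive holding times $T_1, T_2, \dots$ are independent, so $\tau_M = \sum_{n=1}^{M-1} T_n$ is a sum of independent exponentials with rates $\kappa, 2\kappa, \dots, (M-1)\kappa$. By the R\'enyi representation of exponential order statistics, such a sum is equal in distribution to $\max\{U_1, \dots, U_{M-1}\}$, where the $U_i$ are independent and exponentially distributed with rate $\kappa$, which immediately yields $P(\tau_M \le t) = \prod_{i=1}^{M-1} P(U_i \le t) = (1 - e^{-\kappa t})^{M-1}$.

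Finally, along the master-equation route one concludes by summing the geometric tail: writing $p = e^{-\kappa t}$,
\[
	P(X(t) \ge M) = \sum_{n=M}^\infty p(1-p)^{n-1} = p(1-p)^{M-1}\sum_{j=0}^\infty (1-p)^j = (1-p)^{M-1} = (1-e^{-\kappa t})^{M-1},
\]
which is the claim. The main obstacle is the distributional identity in the middle step; everything else follows either from the monotone pure-birth structure or from an elementary geometric-series computation. I would lead with the holding-time/R\'enyi argument, since it produces the answer directly without solving the master equation, and mention the forward-equation induction as an alternative verification.
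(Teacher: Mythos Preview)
Your argument is correct. Both you and the paper begin by writing $\tau_M$ as a sum of independent exponentials with rates $\kappa, 2\kappa,\dots,(M-1)\kappa$, but the paper then invokes the explicit hypoexponential density, integrates it, and simplifies the resulting alternating sum via the binomial theorem to reach $(1-e^{-\kappa t})^{M-1}$. Your R\'enyi route bypasses this computation entirely by recognizing that sum as the maximum of $M-1$ i.i.d.\ $\text{Exp}(\kappa)$ variables, which immediately gives the product form; this is cleaner and conceptually more transparent. Your alternative master-equation route (showing $X(t)$ is geometrically distributed and summing the tail) is genuinely different from the paper and equally valid, though it requires an induction the R\'enyi argument avoids. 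The paper's approach has the minor advantage of being entirely self-contained (no appeal to the R\'enyi representation), at the cost of a page of algebra.
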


\begin{proof}
We first prove the results when $\kappa=1$. Suppose the unit-rate Poisson process $Y$ has holding times $e_1, e_2, \dots$, where $e_i$ are unit exponentials. Then 
\begin{align*}
\tau_M &= \frac{e_1}{1} + \frac{e_2}{2} + \frac{e_3}{3} +\ldots \frac{e_{M-1}}{M-1}= \sum_{i=1}^{M-1} \frac{e_i}{i},
\end{align*}
Notice that $\tau_M$ is the sum of exponential random variables with distinct parameters $\rho_i = i$. Thus $\tau_M$ is hypoexponentially distributed with density (see  \cite{bibinger2013notes}):
\begin{equation*}
f_{\tau_M} (t) = \sum_{i=1}^{M-1} \rho_i e^{-\rho_i t }\left (\prod_{j\neq i}^{M-1} \frac{\rho_j}{ \rho_j - \rho_i}\right).
\end{equation*}
Thus, 
\begin{align*}
P(\tau_M \leq t) & =  \int_0^t f_{\tau_M} (s) ds = \int_0^t \sum_{i=1}^{M-1} \rho_i e^{-\rho_i s } \left(\prod_{j\neq i}^{M-1} \frac{\rho_j}{ \rho_j - \rho_i} \right) ds \\
& =  \sum_{i=1}^{M-1} \left[ \prod_{j\neq i}^{M-1} \frac{\rho_j}{ \rho_j - \rho_i}\right]   \int_0^t \rho_i e^{-\rho_i s } ds =  \sum_{i=1}^{M-1} \left(\prod_{j\neq i}^{M-1} \frac{\rho_j}{ \rho_j - \rho_i} \right) (1-e^{-\rho_i t}) \\
& = \sum_{i=1}^{M-1} \left(\prod_{j\neq i}^{M-1} \frac{\rho_j}{ \rho_j - \rho_i}\right) - \sum_{i=1}^{M-1} \left(\prod_{j\neq i}^{M-1} \frac{\rho_j}{ \rho_j - \rho_i}\right)e^{-\rho_i t} .
\end{align*}
To simplify we note that 
\begin{align*}
\prod_{j\neq i}^{M-1} \frac{\rho_j}{ \rho_j - \rho_i}  & = \prod_{j\neq i}^{M-1} \frac{j}{j - i} = \frac{1}{1-i} \frac{2}{2-i}  \cdots \frac{i-1}{-1} \frac{i+1}{1} \frac{i+2}{2} \cdots \frac{M-1}{M-1-i}  \\
&  = (-1)^{i-1} \frac{1}{i-1} \frac{2}{i-2}\cdots \frac{i-1}{1} \frac{i+1}{1} \frac{i+2}{2} \cdots \frac{M-1}{M-1-i} \\
& = (-1)^{i-1} \frac{(M-1)!}{(M-1-i)! i!} =- (-1)^{i} \binom{M-1}{i}.
\end{align*}
Thus, 
\begin{align}
\begin{split}
P(\tau_M \leq t) 
&  = - \sum_{i=1}^{M-1}(-1)^{i} \binom{M-1}{i}   + \sum_{i=1}^{M-1} (-1)^{i} \binom{M-1}{i}e^{-i t} \\
&  = - \sum_{i=0}^{M-1}(-1)^{i} \binom{M-1}{i}   + \sum_{i=0}^{M-1} (-1)^{i} \binom{M-1}{i}e^{-i t} \\
&= (1-e^{-t})^{M-1},
\end{split}
\label{eq:876978698}
\end{align}
where the third equality follows since the $i = 0$ terms cancel out, and the fourth  equality follows by applying the binomial theorem twice.

The result is therefore shown when $\kappa = 1$.  When $\kappa \ne 1$,  we have
\begin{align*}
\tau_M &= \frac{e_1}{\kappa} + \frac{e_2}{2\kappa} + \frac{e_3}{3\kappa} +\ldots + \frac{e_{M-1}}{(M-1)\kappa} = \frac{1}{\kappa} \sum_{i=1}^{M-1} \frac{e_i}{i} 
\end{align*}
and, simply by scaling, 
\begin{align*}
P(\tau_M \leq  t ) = P \left( \sum_{i=1}^{M-1} \frac{e_i}{i}  \leq \kappa t \right) = (1-e^{-\kappa t})^{M-1},
\end{align*}
where we used \eqref{eq:876978698}.
\end{proof}

In the next lemma we will consider models with a general positive initial condition and with linear intensity functions but larger jumps sizes (denoted by $\ell$).  In particular, the model below can be thought of as arising from the reaction network \textcolor{black}{$A \overset{\kappa}\to (\ell+1) A$}.

\begin{lemma}\label{lemmalemma}
Suppose $X$ satisfies
\begin{align}\label{eq:representation12121}
X(t) & =  x_0+ \ell Y \left(\kappa \int_0^t X(s) ds\right) 
\end{align}
where $Y$ is a unit-rate Poisson process and $\kappa>0$. For any integer $M  > x_0$, define $\tau_M =  \inf \{t\geq 0: X(t)  \geq M \}$. Then for $M \ge \ell + x_0$,
\[
P(\tau_M \leq t) \leq C \delta^M
\]
for some constant $C = C(t,\ell ,\kappa,x_0) >0$ and $\delta = \delta(t,\ell,\kappa, x_0) \in (0,1)$.
\end{lemma}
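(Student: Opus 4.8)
The plan is to reduce the statement to the exact computation already carried out in Lemma \ref{lemma3} via a stochastic comparison of hitting times. First I would observe that the process in \eqref{eq:representation12121} is a pure-birth (Yule-type) chain: it starts at $x_0$ and, after $i$ jumps, sits at state $x_0 + i\ell$ and waits an $\mathrm{Exp}(\kappa(x_0+i\ell))$ holding time before jumping again. Writing the successive unit-exponential holding clocks as $e_1,e_2,\dots$, and noting that exactly $n^* \eqdef \ceil{(M-x_0)/\ell}$ jumps are required before $X$ first reaches or exceeds $M$, we obtain the representation
\[
\tau_M = \sum_{i=0}^{n^* - 1} \frac{e_{i+1}}{\kappa(x_0 + i\ell)}.
\]

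The heart of the argument is a single elementary inequality: for every integer $i \ge 0$,
\[
x_0 + i\ell \le (x_0 + \ell)(i+1),
\]
which holds because $(x_0+\ell)(i+1) - (x_0 + i\ell) = x_0 i + \ell \ge 0$. Consequently each true rate $\kappa(x_0+i\ell)$ is dominated by the rate $\kappa(x_0+\ell)(i+1)$ of a \emph{faster} Yule process, namely the one that starts at $1$, makes unit jumps, and carries rate constant $\kappa(x_0+\ell)$. Since larger rates mean shorter holding times, this yields the pathwise (hence stochastic) lower bound
\[
\tau_M \ge \sigma \eqdef \frac{1}{\kappa(x_0+\ell)}\sum_{j=1}^{n^*} \frac{e_j}{j},
\]
and $\sigma$ is precisely the time for the process of Lemma \ref{lemma3}, run with rate constant $\kappa(x_0+\ell)$, to reach level $n^* + 1$.

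I would then invoke Lemma \ref{lemma3} directly: since $\tau_M \ge \sigma$ pathwise, $P(\tau_M \le t) \le P(\sigma \le t) = (1 - e^{-\kappa(x_0+\ell)t})^{n^*}$. Setting $\delta \eqdef (1 - e^{-\kappa(x_0+\ell)t})^{1/\ell} \in (0,1)$ and using $n^* \ge (M - x_0)/\ell$ together with the monotonicity of $u \mapsto u^s$ on $(0,1)$ gives
\[
P(\tau_M \le t) \le \delta^{\,M - x_0} = C\delta^M, \qquad C \eqdef \delta^{-x_0},
\]
which is the claimed bound, with $C$ and $\delta$ depending only on $t,\ell,\kappa,x_0$; the hypothesis $M \ge \ell + x_0$ merely guarantees $n^* \ge 1$ so that the sums above are nonempty.

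I expect the only delicate point to be orienting the comparison correctly: because we want to \emph{upper} bound $P(\tau_M \le t)$, we must \emph{minorize} $\tau_M$, which forces bounding the state-dependent rates \emph{from above} and matching the resulting hitting time to the $\ell=1$, $x_0=1$ template of Lemma \ref{lemma3}. Everything else is bookkeeping: the index shift between ``number of jumps'' and ``level reached,'' and checking that the comparison inequality holds uniformly in $i$.
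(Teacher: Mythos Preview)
Your proof is correct and follows essentially the same route as the paper: express $\tau_M$ as a sum of scaled exponentials, bound the state-dependent rates $\kappa(x_0+i\ell)$ from above by a constant multiple of $\kappa(i+1)$, and thereby minorize $\tau_M$ by the hitting time of the unit-jump Yule process of Lemma \ref{lemma3}. The only cosmetic difference is the choice of comparison constant: the paper uses $c=\max\{x_0,\ell\}$ (via $x_0+(i-1)\ell\le ci$), whereas you use $x_0+\ell$ (via $x_0+i\ell\le (x_0+\ell)(i+1)$), yielding slightly different but equally valid $C$ and $\delta$.
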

\begin{proof}
Let $Y$ be the Poisson process in \eqref{eq:representation12121} and define $Z$ to be the solution to
\begin{align*}
Z(t) & =  1 +  Y \left(\kappa \int_0^t Z(s) ds \right).
\end{align*}
Note that $Z$ is using the same Poisson process as $X$.  However, the two processes are different as $Z$ has an initial condition of $Z(0) = 1$ and  only jumps by size 1. In particular, the process $Z$ satisfies the assumptions of Lemma \ref{lemma3}.
By Lemma \ref{lemma3}, if we denote $\mu_m = \inf \{t\geq 0: Z(t) \geq m \}$, we have
\begin{align}\label{eq:dfhsdf}
P(\mu_m \leq t) =  (1-e^{-\kappa t})^{m-1}.
\end{align}
Denote the unit-rate exponential holding times of the Poisson process $Y$ by  $e_1, e_2, \dots$.  Then, for $m \ge 2$, 
\begin{align*}
\tau_{(m-1)\ell +x_0} & =  \frac{1}{\kappa} \left(\frac{e_1}{x_0} + \frac{e_2}{x_0+\ell} + \frac{e_3}{x_0+2\ell} + \ldots + \frac{e_{m-1}}{x_0+(m-2)\ell} \right) = \frac{1}{\kappa} \sum_{i=1}^{m-1} \frac{e_i}{x_0+(i-1)\ell} \\
\mu_m &=  \frac{1}{\kappa} \left(\frac{e_1}{1} + \frac{e_2}{2} + \frac{e_3}{3} +\ldots + \frac{e_{m-1}}{m-1} \right)  = \frac{1}{\kappa} \sum_{i=1}^{m-1} \frac{e_i}{i}.
\end{align*}
Hence if we let $ c = \max \{x_0,\ell\}$,
\begin{align*}
\tau_{(m-1)\ell+x_0} & = \frac{1}{\kappa} \sum_{i=1}^{m-1} \frac{e_i}{x_0+(i-1)\ell} \geq  \frac{1}{\kappa} \sum_{i=1}^{m-1} \frac{e_i}{c+(i-1)c} = \frac{1}{c} \mu_{m}.
\end{align*}
Hence,  by \eqref{eq:dfhsdf}
\begin{equation*}
P(\tau_{(m-1)\ell+x_0}  \leq t ) \leq P(\mu_{m}  \leq c t) = (1-e^{-c t \kappa})^{m-1}.
\end{equation*}
Thus, for $M \ge \ell + x_0$,   we may take $m = \lfloor\frac{M-x_0}{\ell}\rfloor + 1$, which must be greater than or equal to 2, and conclude
\begin{equation*}
P(\tau_{M} \leq  t )  \leq P(\tau_{(m-1)\ell+x_0} \leq t )  \leq  (1-e^{-c t \kappa})^{m-1}   \leq  (1-e^{-c t \kappa})^{\frac{M-x_0}{\ell}-1}  =  C \delta^M,
\end{equation*}
where $\delta= \left(1-e^{-c t\kappa} \right)^{\frac{1}{\ell}}$ and $C = (1-e^{-c t \kappa})^{-1-\frac{x_0}{\ell}}$.
\end{proof}

\begin{lemma}\label{lemma5}
Let $x_0 \ge 0$ and suppose $X$ satisfies 
\begin{align}\label{eq:representation342}
X(t) & = x_0 +  \ell Y \left(  \int_0^t \kappa(1+X(s) )ds  \right) 
\end{align}
where $Y$ is a unit-rate Poisson process and $\kappa>0$. For any integer $M  > x_0$, define $\tau_M =  \inf \{t\geq 0: X(t)  \geq M \}$. Then for $M \ge \ell + x_0$, 
\begin{equation}
\label{eq:87696796976}
P(\tau_M \leq t) \leq C \delta^M
\end{equation}
for some constant $C = C(t,\ell ,\kappa,x_0) >0$ and $\delta = \delta(t,\ell,\kappa,x_0) \in (0,1)$.
\end{lemma}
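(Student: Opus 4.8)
The plan is to reduce this lemma to Lemma \ref{lemmalemma} by a simple change of variable that absorbs the constant term in the intensity. The key observation is that the affine rate $\kappa(1+X(s))$ becomes purely linear once we shift the state by one. Concretely, I would set $W(t) = 1 + X(t)$. Since $W$ jumps by $\ell$ exactly when $X$ does, and $W(s) = 1 + X(s)$, substituting into \eqref{eq:representation342} gives
\[
W(t) = (1+x_0) + \ell\, Y\left(\kappa\int_0^t W(s)\,ds\right),
\]
which is precisely the representation \eqref{eq:representation12121} appearing in Lemma \ref{lemmalemma}, driven by the \emph{same} unit-rate Poisson process $Y$, but with the initial value $1 + x_0$ in place of $x_0$.

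Next I would translate the hitting times. Because $X(t) \geq M$ if and only if $W(t) \geq M+1$, the stopping time $\tau_M = \inf\{t \geq 0 : X(t) \geq M\}$ coincides with the first time $W$ reaches $M+1$. Applying Lemma \ref{lemmalemma} to the process $W$, with initial condition $1+x_0$ and threshold $M+1$, is legitimate provided the threshold satisfies the hypothesis $M + 1 \geq \ell + (1+x_0)$, i.e.\ $M \geq \ell + x_0$, which is exactly the range asserted in the statement; the accompanying requirement $M + 1 > 1 + x_0$ is guaranteed by $M > x_0$. This yields $P(\tau_M \leq t) \leq \widetilde C\,\widetilde\delta^{\,M+1}$, where $\widetilde C = \widetilde C(t,\ell,\kappa, 1+x_0)$ and $\widetilde\delta = \widetilde\delta(t,\ell,\kappa,1+x_0) \in (0,1)$ are the constants produced by Lemma \ref{lemmalemma}.

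Finally, I would absorb the off-by-one exponent into the prefactor: since $\widetilde\delta \in (0,1)$, we have $\widetilde\delta^{\,M+1} = \widetilde\delta\cdot\widetilde\delta^{\,M}$, so setting $C = \widetilde C\,\widetilde\delta$ and $\delta = \widetilde\delta$ gives the claimed bound \eqref{eq:87696796976} with $C,\delta$ depending only on $t,\ell,\kappa,x_0$. There is essentially no hard step here; the only points requiring care are verifying that the shifted process $W$ inherits the \emph{exact} representation of Lemma \ref{lemmalemma} (so that the lemma genuinely applies, rather than only a comparison being available) and the bookkeeping of the shift of the initial condition and threshold by one, so that the stated range $M \geq \ell + x_0$ emerges correctly.
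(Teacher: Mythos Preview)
Your proposal is correct and follows essentially the same approach as the paper: the paper sets $Z(t)=X(t)+1$, obtains the representation $Z(t)=(1+x_0)+\ell Y(\kappa\int_0^t Z(s)\,ds)$, applies Lemma~\ref{lemmalemma} to $Z$ at threshold $M+1$, and absorbs the extra factor of $\delta$ into the constant exactly as you do.
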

\begin{proof}
Define $Z(t) = X(t) + 1$, then we can rewrite \eqref{eq:representation342} to get the stochastic equation for $Z(t)$, which is
\begin{align}\label{eq:asdasfasfasfa}
Z(t) = (1+x_0) +  \ell Y \left(  \int_0^t \kappa Z(s) ds  \right). 
\end{align}
For any integer $k$, define $\mu_k= \inf \{t\geq 0: Z(t) \geq k \}$.  Then, by Lemma \ref{lemmalemma}, 
\begin{align}\label{eq:dfhsdf3}
P(\mu_k \leq t) \leq C \delta^k,
\end{align}
so long as  $k \ge \ell + x_0 + 1$. By the definition of $Z(t)$, $\tau_M = \mu_{M+1}$  and thus, so long as  $M +1 \ge \ell + x_0 +1$,
\begin{align*}
P(\tau_M \leq t ) = P(\mu_{M+1} \leq t) \leq C \delta^{M+1}  = \tilde{C} \delta^M
\end{align*}
where $\tilde{C} = C \delta = (1-e^{-c t \kappa})^{-1-\frac{x_0}{\ell}}$ and $\delta= \left(1-e^{-c t\kappa} \right)^{\frac{1}{\ell}}$ for $c = \max \{x_0+1,\ell\}$. 
\end{proof}

\begin{remark}\label{remark}
Suppose $\{Y_k\}_{k = 1}^K$ are independent unit-rate Poisson processes and suppose that the process $\overline{X}$ satisfies the representation
\begin{align*}
\overline{X}(t) & = x_0 + \sum_{k=1}^K   \ell Y_k \left(  \int_0^t \kappa_k (1+ \overline{X}(s))  ds \right).  
\end{align*}
Then the process has the same distribution as the process $X$ satisfying the representation
\begin{align*}
X(t) & = x_0 + \ell Y  \left(  \int_0^t \kappa (1+ X(s))  ds \right),  
\end{align*}
where $Y$ is a unit-rate Poisson process and $\kappa = \sum_{k=1}^K \kappa_k$.
By Lemma \ref{lemma5}, $X$ satisfies \eqref{probgeo}, implying that $\overline X$ does as well.   
\end{remark}

We now turn to the proof of Lemma \ref{boundednessAssumption}. 

\begin{proof}[Proof of Lemma \ref{boundednessAssumption}]  
Denote $\mathcal{P} = \{k\in \{1,\dots,K\}: \zeta_k\cdot \vec{1} >0\}$ and let $\ell = \displaystyle \max_{k \in \mathcal{P}} \{ \zeta_k\cdot  \vec{1} \}$.  We choose a specific $\theta \in \Theta$ and suppose that $X^\theta$ and $Z$ are defined (coupled)  via
\begin{align}
X^\theta(t) & = x_0+ \sum_{k=1}^K  Y_k \left( \int_0^t \lambda_k^\theta(X^\theta(s),s) ds \right) \zeta_k,\notag\\
Z(t) & =  \| x_0 \|_1 +  \ell \sum_{k\in \mathcal{P}}  Y_k \left(  \int_0^{t}  \overline C (1+Z(s)) ds \right)\label{dkajf;kjkjfa}
\end{align}
where $\{Y_k\}$ are independent unit-rate Poisson processes, and $\overline C$ is defined as in Assumption \ref{assumpMain}. Define $\mu_m  = \inf \{t\geq 0: Z(t) \geq m \}$. Then, so long as $m \ge \|x_0\|_1 + \ell$,  Lemma \ref{lemma5} (and Remark \ref{remark}) implies 
\begin{align}\label{eq:neededbound}
P(\mu_m \leq t ) \leq C \delta^m,
\end{align}
where  $c = \max \{ \|x_0\|_1+1,\ell\}$ and
\begin{equation}\label{eq:constconst}
C = (1-e^{-c t \overline CK})^{-1-\frac{\|x_0\|_1}{\ell}}, \qquad \qquad \delta= \left(1-e^{-c t \overline CK} \right)^{\frac{1}{\ell}}.
\end{equation}
 Turning to $X^\theta$, we take the 1-norm and find
\begin{align}
\| X^\theta(t) \|_1 = X^\theta(t) \cdot \vec{1} & = x_0\cdot \vec{1} + \sum_{k=1}^K Y_k \left( \int_0^t \lambda_k^\theta(X^\theta(s),s) ds \right) ( \zeta_k \cdot \vec{1})\notag\\
&\leq  \| x_0 \|_1 +  \ell \sum_{k\in \mathcal{P} }   Y_k \left(\overline C  \int_0^t  (1+\| X^\theta(s) \|_1)ds \right). \label{eq:u6897667855745}
\end{align}

\textcolor{black}{Note that if we can show $\|X^\theta(t) \|_1 \leq Z(t)$ for all $t\geq 0$, then $\tau_m^\theta\geq \mu_m$, and \eqref{probgeo} will be implied by \eqref{eq:neededbound}. Hence the rest of the proof will consist of showing $\|X^\theta(t) \|_1 \leq Z(t)$ for all $t\geq 0$.  We will proceed  by induction, and the arguments will rely crucially on the coupling in \eqref{dkajf;kjkjfa}. }

\textcolor{black}{Denote $\alpha_n$ as the $n$th jump time for $X^{\theta}$ due to reactions in $\mathcal{P}$.  That is,
\begin{align}
\alpha_n = \inf \{ t> \alpha_{n-1}: X^{\theta} (t) =  X^{\theta} (t-) + \zeta_k \text{ for some } k\in \mathcal{P} \},  \text{    with    } \alpha_0 = 0.
 \end{align}
\begin{enumerate}
\item Consider $n = 1$. Any reaction before time $\alpha_1$ will not increase $\| X^{\theta} (t) \|_1$, since $\zeta_k\cdot \vec{1} \leq 0$ for any $k \not \in \mathcal{P}$. On the other hand, $Z(t)$ is monotonically increasing, and hence $\|X^{\theta}(t)\|_1 \leq Z(t)$ for any $t< \alpha_{1}$. Consequently, 
\begin{align}
\|X^{\theta} (\alpha_1) \|_1 & = \|x_0\|_1 + \sum_{k=1}^K Y_k \left( \int_0^{\alpha_1} \lambda_k^\theta(X^\theta(s),s) ds \right) ( \zeta_k \cdot \vec{1})\notag\\
& \leq  \|x_0\|_1 + \ell \sum_{k\in \mathcal{P} }Y_k \left( \overline C  \int_0^{\alpha_1}  (1+ Z(s) )ds \right) = Z(\alpha_1).  \label{eq:sdfsdngioushg}
\end{align}
\item Now let $n \ge 2$ and suppose that $\|X^{\theta}(t)\|_1 \le Z(t)$ for all $t\leq \alpha_{n-1}$.  Then by the same reasoning as above, for any $t<\alpha_n$, $\| X^{\theta} (t) \|_1 \leq Z(t)$, and similarly to \eqref{eq:sdfsdngioushg}, $\|X^{\theta} (\alpha_n) \|_1 \leq Z(\alpha_n)$. 
\end{enumerate} 
Since $Z(t)$ is non-explosive and $\|X^{\theta} (t) \|_1 \leq Z(t)$,  we have that $\alpha_n \rightarrow \infty$, as $n\rightarrow \infty$, and the proof for the particular value of $\theta$ chosen is now complete. Noting that the bound in \eqref{eq:sdfsdngioushg} is uniform in $\theta$ completes the proof.
}
\end{proof}

\subsection{More Lemmas}\label{morelemmas}
In this section, we will prove some technical lemmas which will be used in Section \ref{proofmain} to prove the main result, Theorem \ref{mainthm1}.   The next two lemmas are provided for completeness.
\begin{lemma}\label{seriescon}
Suppose $k\geq 1 $ and $l\in \Z_{> 0}$ are fixed and $\delta \in (0,1)$. Then
\begin{align*}
\sum_{n =1}^\infty n^k \delta^{(n^{1/\ell})} < \infty.
\end{align*}
\end{lemma}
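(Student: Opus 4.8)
The plan is to show the series $\sum_{n=1}^\infty n^k \delta^{(n^{1/\ell})}$ converges by the comparison/substitution technique: the summand decays faster than any geometric rate once $n$ is large. The key observation is that with $\delta \in (0,1)$, we have $\delta^{(n^{1/\ell})} = e^{-c\, n^{1/\ell}}$ where $c = -\ln\delta > 0$, so the tail behaves like $n^k e^{-c\, n^{1/\ell}}$, and the stretched-exponential factor eventually dominates any polynomial. I would make this rigorous either via the integral test or, more cleanly, via the ratio/root comparison. The cleanest route is to apply the root test directly.

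First I would rewrite the summand as $a_n = n^k e^{-c\, n^{1/\ell}}$ with $c = -\ln \delta > 0$ (since $0 < \delta < 1$). Then I would compute $a_n^{1/n} = n^{k/n} e^{-c\, n^{1/\ell - 1}}$. Since $n^{k/n} = e^{(k/n)\ln n} \to 1$ and $n^{1/\ell - 1} \to 0$ (because $\ell \geq 1$ forces $1/\ell - 1 \leq 0$), the root test gives $\limsup_{n\to\infty} a_n^{1/n} = 1$, which is inconclusive. So the root test is too crude; I would instead use a direct polynomial-versus-exponential comparison.

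The approach I would actually carry out is to dominate the summand by a convergent geometric series. Fix any $\rho \in (0,1)$ and set $\eta = -\ln\rho > 0$. I claim that for all $n$ large enough, $n^k e^{-c\, n^{1/\ell}} \leq \rho^n$, equivalently $k \ln n - c\, n^{1/\ell} \leq -\eta n$. Rearranging, it suffices to show $c\, n^{1/\ell} - \eta n - k\ln n \geq 0$ for large $n$; but in fact since $\ell \geq 1$ we have $n^{1/\ell} \leq n$, so this naive bound fails when $\ell > 1$. The correct comparison is the reverse: I would instead bound against a fixed geometric series in the variable $m = \lfloor n^{1/\ell}\rfloor$, grouping terms. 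Concretely, for each integer $m \geq 1$ there are at most $C'(m+1)^{\ell-1}$ integers $n$ with $m \leq n^{1/\ell} < m+1$ (i.e. $m^\ell \leq n < (m+1)^\ell$), and for such $n$ we have $n^k < (m+1)^{k\ell}$ and $\delta^{(n^{1/\ell})} \leq \delta^m$. Hence
\begin{align*}
\sum_{n=1}^\infty n^k \delta^{(n^{1/\ell})} \leq \sum_{m=1}^\infty C'(m+1)^{\ell-1}(m+1)^{k\ell}\,\delta^m = C' \sum_{m=1}^\infty (m+1)^{k\ell + \ell - 1}\,\delta^m,
\end{align*}
and the right-hand side is a convergent series since $\delta \in (0,1)$ and polynomial-times-geometric series converge (by the ratio test, the ratio tends to $\delta < 1$). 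This establishes finiteness.

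The main obstacle is precisely the grouping bookkeeping: one must correctly count how many integers $n$ fall in each dyadic-like block $[m^\ell, (m+1)^\ell)$ and bound both $n^k$ and $\delta^{(n^{1/\ell})}$ uniformly over that block, taking care that $\ell$ need only be a positive integer so $n^{1/\ell}$ is generally irrational and the blocks have size growing like $m^{\ell-1}$. Once the reduction to a standard polynomial-times-geometric series $\sum_m (m+1)^{k\ell+\ell-1}\delta^m$ is in place, convergence is immediate from the ratio test, so the remaining work is entirely routine.
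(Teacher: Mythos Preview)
Your grouping argument is correct: partitioning $\mathbb{Z}_{\ge 1}$ into blocks $[m^\ell,(m+1)^\ell)$, using $(m+1)^\ell - m^\ell \le \ell(m+1)^{\ell-1}$ to count each block, and bounding $n^k \le (m+1)^{k\ell}$ and $\delta^{n^{1/\ell}} \le \delta^m$ on each block reduces the sum to $\ell\sum_{m\ge 1}(m+1)^{k\ell+\ell-1}\delta^m$, which converges by the ratio test. The paper states this lemma ``for completeness'' but omits the proof entirely, so there is nothing to compare against; your final argument is a clean and standard way to handle it (the preliminary detours through the root test and the failed direct geometric comparison could simply be deleted).
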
 


\begin{lemma}\label{Calculus}
Suppose $x\in [0,1]$ and $n\geq 2$. Then 
\begin{align*}
1-(1-x)^n \leq nx. 
\end{align*}
\end{lemma}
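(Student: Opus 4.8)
The plan is to prove this via the standard algebraic factorization of $1 - y^n$, which turns the claim into a transparent bound on a finite geometric sum. First I would set $y = 1-x$, noting that since $x \in [0,1]$ we have $y \in [0,1]$ as well. Then I would invoke the elementary identity
\[
1 - y^n = (1-y)\sum_{j=0}^{n-1} y^j,
\]
so that, substituting back $y = 1-x$ and using $1 - y = x$,
\[
1 - (1-x)^n = x \sum_{j=0}^{n-1} (1-x)^j.
\]

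The key step is then to bound the sum. Since $0 \le 1-x \le 1$, every summand satisfies $(1-x)^j \le 1$, and there are exactly $n$ of them, so $\sum_{j=0}^{n-1}(1-x)^j \le n$. Multiplying through by the nonnegative factor $x$ yields $1 - (1-x)^n \le nx$, which is the claim. I would remark that the hypothesis $n \ge 2$ is not actually required for this argument (it holds already for $n \ge 1$), but it is of course harmless.

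There is essentially no genuine obstacle here: the only point demanding any attention is verifying that the substituted quantity $1-x$ lies in $[0,1]$ so that the termwise estimate $(1-x)^j \le 1$ is legitimate, and this is immediate from $x \in [0,1]$. As alternatives of comparable brevity, one could appeal directly to Bernoulli's inequality $(1-x)^n \ge 1 - nx$ (valid for $-1 \le -x$, hence for $x \le 1$) and rearrange, or argue by induction on $n$, using $(1-x)^{n} \le 1$ in passing from $n$ to $n+1$. Given that the lemma is stated \emph{for completeness}, I would present the factorization route as the cleanest self-contained option.
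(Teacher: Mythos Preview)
Your argument is correct and entirely self-contained; the factorization $1-(1-x)^n = x\sum_{j=0}^{n-1}(1-x)^j$ together with the trivial bound $(1-x)^j \le 1$ for $x\in[0,1]$ does the job, and your observation that $n\ge 2$ is not actually needed is accurate. The paper itself gives no proof of this lemma---it is merely stated ``for completeness''---so there is nothing to compare your approach against; your write-up would serve perfectly well as the omitted proof.
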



To prove theorem \ref{mainthm1}, we will need some qualitative properties of $N_{q_K}^{\theta,\varepsilon}(t)$ introduced in \eqref{simulationrep} for the coupled processes. The final lemma concerns the moments of $N_{q_K}^{\theta,\varepsilon}(t)$.

\begin{lemma}\label{TCPMoments}
Let $( \mathcal{S},\mathcal{C}, \mathcal{R})$ be a reaction network with $d$ species and $K$ reactions.    Let  $\{ X^{\theta} \}$ be a family of stochastic models whose associated intensity functions $\{\lambda_k^{\theta}\}$ are parametrized by $\theta\in \Theta \subset \R_{>0}^\gamma$,  for some $\gamma \in \Z_{>0}$, where $\Theta$ is compact.  Assume $\{\lambda_k^\theta\}$ satisfies conditions \eqref{assump1} and \eqref{growth} of Assumption \ref{assumpMain}. Fix $\theta \in \Theta$ and $\theta+\varepsilon \in \Theta$, and suppose $(X^{\theta+\varepsilon}, X^{\theta})$ are coupled via the stacked coupling \eqref{stackedcoupling}, with $N_{q_K}^{\theta,\varepsilon}(t)$ defined as in \eqref{simulationrep}. Then for any $r\geq 1$,  $\EE[\left( N_{q_K}^{\theta,\varepsilon}(t) \right) ^r ] < \infty$. Consequently, $N_{q_K}^{\theta,\varepsilon}(t)< \infty$ almost surely. 
\end{lemma}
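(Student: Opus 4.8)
I would prove the finiteness of the moments by dominating $N_{q_K}^{\theta,\varepsilon}(t)$, on suitable localizing events, by the count of the point process $N$ over a \emph{deterministic} region whose moments are explicit Poisson moments, and then summing over the localization using the exponential tail estimate of Lemma \ref{boundednessAssumption}. To set this up, since both $\theta$ and $\theta+\varepsilon$ lie in the compact set $\Theta$ and the constants $C,\delta$ in Lemma \ref{boundednessAssumption} are independent of the parameter, I would introduce the joint exit time $\tau_m = \tau_m^{\theta}\wedge \tau_m^{\theta+\varepsilon}$ and apply a union bound to get
\begin{align*}
P(\tau_m \le t) \le P(\tau_m^{\theta}\le t) + P(\tau_m^{\theta+\varepsilon}\le t) \le 2C\delta^m,
\end{align*}
for all $m$ large enough, with $\delta\in(0,1)$. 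Condition \eqref{assump1} of Assumption \ref{assumpMain} is exactly what allows Lemma \ref{boundednessAssumption} to be invoked here.

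The key step is a pathwise domination. On the event $\{t<\tau_{m+1}\}$ both $\|X^{\theta+\varepsilon}(s-)\|_1$ and $\|X^{\theta}(s-)\|_1$ are at most $m$ for every $s\le t$, so condition \eqref{growth} of Assumption \ref{assumpMain} gives $\overline\lambda^{\,\theta,\varepsilon}_\ell(s-)\le \overline C(1+m^p)$ for each $\ell$ (taking $p$ to be the largest of the per-reaction exponents), and hence $q_K^{\theta,\varepsilon}(s-)\le \Lambda_m := K\overline C(1+m^p)$ for all $s\le t$. Since the predictable integrand defining $N_{q_K}^{\theta,\varepsilon}(t)$ is then pointwise bounded by $1_{[0,\Lambda_m)}(x)$ on $[0,t]$, integrating against $N$ yields, on $\{t<\tau_{m+1}\}$,
\begin{align*}
N_{q_K}^{\theta,\varepsilon}(t) \le P_m := N\big([0,t]\times[0,\Lambda_m)\big),
\end{align*}
where $P_m$ counts the points of $N$ in a region that is \emph{deterministic} given $m$, so $P_m$ is genuinely Poisson distributed with mean $t\Lambda_m$.

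To finish, note that $P(\tau_m\le t)\to 0$ forces $\tau_m\to\infty$ almost surely, so with the convention $\tau_0=0$ the events $\{\tau_m\le t<\tau_{m+1}\}_{m\ge0}$ partition the sample space up to a null set. Using the domination and then $\{\tau_m\le t<\tau_{m+1}\}\subseteq\{\tau_m\le t\}$ gives
\begin{align*}
\EE\big[(N_{q_K}^{\theta,\varepsilon}(t))^r\big]
= \sum_{m=0}^\infty \EE\big[(N_{q_K}^{\theta,\varepsilon}(t))^r 1_{\{\tau_m\le t<\tau_{m+1}\}}\big]
\le \sum_{m=0}^\infty \EE\big[P_m^r\, 1_{\{\tau_m\le t\}}\big].
\end{align*}
Applying Cauchy--Schwarz termwise, $\EE[P_m^r 1_{\{\tau_m\le t\}}] \le (\EE[P_m^{2r}])^{1/2}(P(\tau_m\le t))^{1/2}$. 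The $2r$-th moment of a Poisson variable is a polynomial of degree $2r$ in its mean, so $(\EE[P_m^{2r}])^{1/2}$ grows only polynomially in $m$, while for large $m$ we have $(P(\tau_m\le t))^{1/2}\le (2C)^{1/2}\delta^{m/2}$. The finitely many small-$m$ terms are each finite (bounded by $(\EE[P_m^{2r}])^{1/2}$), and the tail is a convergent series of a polynomial times the geometric factor $\delta^{m/2}\in(0,1)$. Hence the total is finite, and taking $r=1$ gives $N_{q_K}^{\theta,\varepsilon}(t)<\infty$ almost surely.

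The main obstacle is the pathwise domination step together with the dependence between the dominating count $P_m$ and the localizing event $\{\tau_m\le t\}$: both are measurable with respect to the same point process $N$, so one cannot simply factor the expectation, which is why I use Cauchy--Schwarz rather than independence. Care is also needed to ensure the comparison $q_K^{\theta,\varepsilon}(s-)\le\Lambda_m$ on $\{t<\tau_{m+1}\}$ translates into a valid inequality between stochastic integrals; this is precisely where the predictability of the integrand and the fact that $\Lambda_m$ is deterministic are used.
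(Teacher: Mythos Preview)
Your proof is correct and takes a genuinely different route from the paper's. The paper estimates the tail $P(N_{q_K}^{\theta,\varepsilon}(t)\ge n)$ directly: it splits on the event $\{\tau_{n^*}\le t\}$ with the calibrated choice $n^* = n^{1/(p+1)}$, bounds the first piece by $2C\delta^{n^{1/(p+1)}}$ via Lemma~\ref{boundednessAssumption}, and on the complement dominates $N_{q_K}^{\theta,\varepsilon}(t)$ by a Poisson variable with mean of order $n^{p/(p+1)}$, to which it applies a Chernoff-type bound $P(Z\ge a)\le \exp(-a\log(a/\mu)+a-\mu)$; it then sums $\sum_n n^r P(N_{q_K}^{\theta,\varepsilon}(t)\ge n)$ and invokes Lemma~\ref{seriescon} for convergence. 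You instead partition on the level $m$ for which $\tau_m\le t<\tau_{m+1}$, dominate $N_{q_K}^{\theta,\varepsilon}(t)$ pathwise by the Poisson count $P_m$ over the deterministic box $[0,t]\times[0,\Lambda_m)$, and handle the dependence between $P_m$ and $\{\tau_m\le t\}$ by Cauchy--Schwarz. Your argument is somewhat cleaner: it avoids the clever choice of $n^*$, the Chernoff inequality, and Lemma~\ref{seriescon}, relying only on the elementary facts that Poisson moments are polynomial in the mean and that a polynomial times a geometric term is summable. The paper's approach, on the other hand, produces an explicit decay rate for the tail of $N_{q_K}^{\theta,\varepsilon}(t)$ itself, though that extra information is not used elsewhere in the paper.
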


Note that there is no requirement for $\varepsilon$ to be small.  Instead, we just need both parameters to be contained within $\Theta$.

\begin{proof} 
We adopt the notation from Lemma \ref{boundednessAssumption}, where 
\[
	\tau_m^\theta =  \inf \left \{t\geq 0:  \| X^\theta(t)\|_1 = \sum_{i=1}^d X_i^\theta (t)  \geq m \right\},
\]
 and let $\displaystyle \tau_m = \min \{ \tau_m^\theta, \tau_m^{\theta+\varepsilon}\}$. Then by Lemma \ref{boundednessAssumption}, for $m$ large enough,
\begin{align*}
P(\tau_m \leq t) \leq P(\tau_m^\theta \leq t)  + P(\tau_m^{\theta+\varepsilon} \leq t)  \leq 2 C \delta^m. 
\end{align*}

The remainder of the proof focuses on bounding the tail probability $P(N_{q_K}^{\theta,\varepsilon}(t)\geq n)$, as $n\to \infty$. 
Define
\begin{align*}
n^* = n^{\frac{1}{p+1}}, 
\end{align*}
where $p$ is the order of the polynomial in condition (\ref{growth}) of Assumption \ref{assumpMain}. With this choice of $n^*$, by Lemma \ref{boundednessAssumption},
\begin{align}\label{eq:sd23rwfewef}
P(N_{q_K}^{\theta,\varepsilon}(t) \geq n, \tau_{n^*} \leq t) & \leq P(\tau_{n^*} \leq t) \leq 2C \delta^{n^\frac{1}{p+1}},
\end{align}
so long as $n$ is large enough.

If $\tau_{n^*} > t$, then $\|X(s)\|_1 \leq n^*$ for all $s\in [0,t]$, and so by condition (\ref{growth}) of Assumption \ref{assumpMain},
\begin{align*}
N_{q_K}^{\theta,\varepsilon}(t) \leq  \int_{[0,t] \times [0,\infty)} 1_{ \left[0 ,\sum_{k=1}^K \overline C (1+n^{\frac{p}{p+1}} )  \right)} (x) N(ds\times dx).
\end{align*}
Denote the process on right-hand side of the above equation by $Z_n(t)$ and note that it is a Poisson process with rate $\overline C K (1 + n^{\frac{p}{p+1}})$.  Since, $Z_n(t)$ is a Poisson random variable, we know
\begin{align*}
P(Z_n(t)\geq a ) \leq e^{ - a\log{\frac{a}{\EE[Z_n(t)]}} + a - \EE[Z_n(t)] }.
\end{align*}
so long as $a> \EE[Z_n(t)]$. Therefore, for $n$ large enough, 
\begin{align}
P(N_{q_K}^{\theta,\varepsilon}(t) \geq n, \tau_{n^*} >  t) 
&\leq P\left( Z_n(t) \geq n, \tau_{n^*} >  t  \right) \notag\\
&\leq P\left(Z_n(t) \geq n \right) \notag\\
&\leq \exp{ \left(  - n\log{\frac{n}{t\overline CK  (1+n^{\frac{p}{p+1}} ) }} + n - t\overline CK  (1+n^{\frac{p}{p+1}} ) \right)  }.\label{eq:asdaodsija23r}
\end{align}
Note that  the leading order on the exponent is $-n\log{n}$. 

Collecting \eqref{eq:sd23rwfewef} and \eqref{eq:asdaodsija23r}, we have
\begin{align*}
P(N_{q_K}^{\theta,\varepsilon}(t)\geq n) &= P(N_{q_K}^{\theta,\varepsilon}(t) \geq n, \tau_{n*} \le t) + P(N_{q_K}^{\theta,\varepsilon}(t) \geq n, \tau_{n*} > t)   \\
&\leq\exp{ \left(  - n\log{ \left (  \frac{n}{t\overline C K (1+n^{\frac{p}{p+1}} ) }\right)  } + n - t \overline C K  (1+n^{\frac{p}{p+1}} ) \right)  } +2 C \delta^{n^\frac{1}{p+1}}.
\end{align*}
We therefore have
\begin{align*}
\EE[ &\left(N_{q_K}^{\theta,\varepsilon}(t)\right)^r] = \sum_{n=1}^\infty n^r P(N_{q_K}^{\theta,\varepsilon}(t)  = n ) \leq \sum_{n=1}^\infty n^r P(N_{q_K}^{\theta,\varepsilon}(t)  \geq n ) \\
& \leq \sum_{n=1}^\infty n^r \left[ \exp{ \left(  - n\log{\left(\frac{n}{t \overline C K (1+n^{\frac{p}{p+1}} ) }\right)} + n - t \overline CK (1+n^{\frac{p}{p+1}} ) \right)  } + 2C \delta^{n^\frac{1}{p+1}} \right] \\
&\leq  \sum_{n=1}^\infty n^r \exp{ \left(  - n\log{\left( \frac{n}{t\overline CK  (1+n^{\frac{p}{p+1}} ) }\right)} + n - t \overline C K(1+n^{\frac{p}{p+1}} ) \right)  } + 2C \sum_{n=1}^\infty n^r \delta^{n^\frac{1}{p+1}}.
\end{align*}
The first series converges by the root test and second series converges by Lemma \ref{seriescon}. 
\end{proof}

\subsection{Proof of main result}\label{proofmain}

We restate the main theorem for completeness.\\

\noindent \textbf{Theorem 2.}  \textit{
Let $( \mathcal{S},\mathcal{C}, \mathcal{R})$ be a reaction network with $d$ species and $K$ reactions. Let  $\{ X^{\theta} \}$ be a family of stochastic models whose associated intensity functions $\{\lambda_k^{\theta}\}$ are parametrized by $\theta\in \Theta \subset \R_{>0}^\gamma$,  for some $\gamma \in \Z_{>0}$, where $\Theta$ is compact.  Suppose that Assumption \ref{assumpMain} holds and that $(X^{\theta}, X^{\theta+\varepsilon})$ are coupled via the stacked coupling (\ref{stackedcoupling}) with $X^{\theta}(0) = X^{\theta + \varepsilon}(0) = x_0 \in \Z^d_{\ge 0}$. Then, for any $\theta$ in the interior of  $\Theta$  and any  $r\ge 1$, there is a $C_{r,t}>0$  and $\overline \varepsilon \in \R_{> 0}$ so that 
\[
	\EE \left[\| X^{\theta+\varepsilon} (t) -X^{\theta}(t) \|_1^r \right] \leq C_{r,t} \| \varepsilon\|_1,
\]
when $ \| \varepsilon \|_1 \le \overline \varepsilon$.
}

\begin{proof}
Denote $\mathcal{P} = \{k\in \{1,\dots,K\}: \zeta_k\cdot \vec{1} >0\}$ and let $\hat \ell = \displaystyle \max_{k \in \{1,\dots,K\}} \{ |\zeta_k|\cdot  \vec{1} \}$.  

Fix $t>0$ and recall the counting process $N_{q_K}^{\theta,\varepsilon}(t)$ from \eqref{simulationrep}.  Denote by $\mu_i$ the time of the $i^{th}$ jump of $N_{q_K}^{\theta,\varepsilon}$. Next, let 
\begin{align*}
	\beta_{\theta,\varepsilon} & = \min_{i=1,2,..., N_{q_K}^{\theta,\varepsilon}(t)} \{ X^\theta (\mu_i)\neq X^{\theta+\varepsilon} (\mu_i) \},
\end{align*}
where the minimum of the emptyset is taken to be infinity.  Hence, $\beta_{\theta,\varepsilon}$ is the number of steps it took for $X^\theta$ and $X^{\theta + \varepsilon}$ to decouple, or is infinity if they have not decoupled by time $t$.

Since $N_{q_K}^{\theta,\varepsilon}(t) < \infty$ almost surely by Lemma \ref{TCPMoments}, the expectation can be calculated as
\begin{align}
\EE \big[\| X^{\theta+\varepsilon}& (t) -X^{\theta}(t) \|_1^r \big] = \sum_{n=1}^\infty \EE[\|X^{\theta+\varepsilon} (t) -X^{\theta}(t) \|_1^r 1_{\{N_{q_K}^{\theta,\varepsilon}(t) = n\}}]\notag \\
& = \sum_{n=1}^\infty \EE[\|X^{\theta+\varepsilon} (t) -X^{\theta}(t) \|_1^r 1_{\{N_{q_K}^{\theta,\varepsilon}(t) = n\}} 1_{\{\beta_{\theta,\varepsilon} \leq n\}}]  \leq   \sum_{n=1}^\infty (\hat \ell n)^r P(N_{q_K}^{\theta,\varepsilon}(t) = n, \beta_{\theta,\varepsilon} \leq \color{blue}{n} ) \notag\\
& =   \sum_{n=1}^\infty \hat \ell ^r n^r P(N_{q_K}^{\theta,\varepsilon}(t) = n)  P( \beta_{\theta,\varepsilon} \leq n | N_{q_K}^{\theta,\varepsilon}(t) = n).\label{eq:78969786967899}
\end{align}
\textcolor{black}{The last inequality is a consequence of Remark \ref{Re:stackedcoupling}, since $X^{\theta+\varepsilon} (0) =  X^{\theta} (0)$ and each jump recorded by either processes will increase their difference $\|X^{\theta+\varepsilon} - X^{\theta}\|_1$ by at most $\hat \ell$.  Since we are indicating on the fact that there was precisely $n$ jumps total, the total change in the 1-norm is at most $\hat \ell n$.}

We turn to the conditional probability above and begin by noting that
\begin{align}\label{eq:7896978678697}
P( \beta_{\theta,\varepsilon} > n | N_{q_K}^{\theta,\varepsilon}(t) = n) = \prod_{i =1}^nP( \beta_{\theta,\varepsilon} > i | N_{q_K}^{\theta,\varepsilon}(t) = n,  \beta_{\theta,\varepsilon} > i-1).
\end{align}
Next, for each $i \in \{1,\dots,n\}$, condition (\ref{coupling}) of Assumption \ref{assumpMain} gives
\begin{align*}
P( \beta_{\theta,\varepsilon} = i | N_{q_K}^{\theta,\varepsilon}(t) = n,  \beta_{\theta,\varepsilon} > i-1) &= \frac{\sum_{k=1}^K |\lambda_k^{\theta+\varepsilon}(X^\theta(\mu_{i-1}),\mu_i)-\lambda_k^{\theta}(X^\theta(\mu_{i-1}),\mu_i) |}{\sum_{k=1}^K \max\{\lambda_k^{\theta+\varepsilon}(X^\theta(\mu_{i-1}),\mu_i),\lambda_k^{\theta}(X^\theta(\mu_{i-1}),\mu_i)\}}\\
&\leq \overline C\|   \varepsilon \|_1.
\end{align*}
Hence,
\[
P( \beta_{\theta,\varepsilon} > i | N_{q_K}^{\theta,\varepsilon}(t) = n,  \beta_{\theta,\varepsilon} > i-1) \ge 1 - \overline C \|\varepsilon\|_1,
\]
and \eqref{eq:7896978678697} yields
\[
 P( \beta_{\theta,\varepsilon} > n | N_{q_K}^{\theta,\varepsilon}(t) = n) \ge (1 - \overline C\|\varepsilon\|_1)^n.
\]
Hence, choosing $\varepsilon$ so that $\overline C \|\varepsilon\|_1 < 1$, 
\begin{align*}
P( \beta_{\theta,\varepsilon} \leq n | N_{q_K}^{\theta,\varepsilon}(t) = n) \leq 1 - (1- \overline C\|\varepsilon\|_1)^n \leq \overline Cn\|\varepsilon\|_1
\end{align*}
where the last inequality comes from Lemma \ref{Calculus}. Plugging the above back into \eqref{eq:78969786967899} yields
\begin{align*}
\EE \left[\| X^{\theta+\varepsilon} (t) -X^{\theta}(t) \|_1^r \right] & \leq  \overline C \|\varepsilon\|_1 \sum_{n=1}^\infty \hat \ell^r n^{r+1} P(N_{q_K}^{\theta,\varepsilon}(t) = n)  = C \hat \ell^r \|\varepsilon\|_1\EE[\left( N_{q_K}^{\theta,\varepsilon}(t) \right) ^{r+1} ] 
\end{align*}
which is finite by Lemma \ref{TCPMoments}.
\end{proof}

The following corollary gives easy to check structural conditions on a mass-action network which guarantee the bound \eqref{eq:mainbound} holds.

\begin{corollary}
Let $( \mathcal{S},\mathcal{C}, \mathcal{R})$ be a reaction network with $d$ species and $K$ reactions. Let  $\{ X^{\theta} \}$ be a family of stochastic models whose associated intensity functions $\{\lambda_k^{\theta}\}$ are given by stochastic mass-action kinetics \eqref{stochasticMA}.  Suppose that $\theta = (\kappa_1, \dots, \kappa_k) \in \Theta \subset \R_{>0}^K$,  where $\Theta$ is compact.  Suppose  that only zeroth and first order reactions produce a net gain in molecules (i.e., those with $\zeta_k \cdot 1 > 0$) and that $(X^{\theta}, X^{\theta+\varepsilon})$ are coupled via the stacked coupling (\ref{stackedcoupling}) with $X^{\theta}(0) = X^{\theta + \varepsilon}(0) = x_0 \in \Z^d_{\ge 0}$. Then, for any $\theta$ in the interior of  $\Theta$  and any  $r\ge 1$, there is a $C_{r,t}>0$  and $\overline \varepsilon \in \R_{> 0}$ so that 
\[
	\EE \left[\| X^{\theta+\varepsilon} (t) -X^{\theta}(t) \|_1^r \right] \leq C_{r,t} \| \varepsilon\|_1,
\]
when $ \| \varepsilon \|_1 \le \overline \varepsilon$.
\end{corollary}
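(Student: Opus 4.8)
The plan is to verify that the structural hypotheses of this corollary imply all three conditions of Assumption \ref{assumpMain}, after which the conclusion follows by a direct application of Theorem \ref{mainthm1}. Since the intensity functions are mass-action \eqref{stochasticMA} and $\Theta \subset \R^K_{>0}$ is compact, Lemma \ref{massactionassump} already guarantees that conditions \eqref{growth} and \eqref{coupling} hold. Thus the only remaining task is to establish condition \eqref{assump1}, the linear growth bound on the reactions in $\mathcal{P}$.

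To verify condition \eqref{assump1}, I would fix $k \in \mathcal{P}$, so that $\zeta_k \cdot \vec{1} > 0$. By hypothesis, such a reaction must be either zeroth or first order. If it is zeroth order, then its source complex is $\emptyset$ and $\lambda_k^\theta(x) = \kappa_k$ is constant, hence bounded by $\kappa_k(1 + \|x\|_1)$. If it is first order, then its source complex is $e_i$ for some $i$, so $\lambda_k^\theta(x) = \kappa_k x_i \leq \kappa_k \|x\|_1 \leq \kappa_k(1 + \|x\|_1)$. In either case the intensity grows at most linearly in $\|x\|_1$.

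It then remains to make the bound uniform over $\theta \in \Theta$. Because $\Theta$ is compact and each coordinate map $\theta \mapsto \kappa_k$ is continuous, the quantity $\sup_{\theta \in \Theta} \max_{k} \kappa_k$ is finite; taking $\overline C$ to be the maximum of this quantity and the constant supplied by Lemma \ref{massactionassump} yields a single $\overline C$ for which all three conditions of Assumption \ref{assumpMain} hold. Since mass-action intensities carry no time dependence, the suprema over $t$ are automatic. With Assumption \ref{assumpMain} verified, Theorem \ref{mainthm1} applies verbatim and gives the stated bound.

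There is no genuine obstacle in this argument: the corollary is essentially a bookkeeping exercise that isolates the one hypothesis of Assumption \ref{assumpMain} not already covered by Lemma \ref{massactionassump}. The only point requiring any care is the interpretation of the structural hypothesis, namely confirming that ``only zeroth and first order reactions produce a net gain'' forces every $k \in \mathcal{P}$ to have a source complex that is either $\emptyset$ or a single $e_i$, which is precisely what makes the corresponding intensity at most linear.
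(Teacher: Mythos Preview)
Your proposal is correct and matches the paper's own proof, which simply states that the result is immediate from Lemma \ref{massactionassump} and Theorem \ref{mainthm1}. You have merely made explicit the one step the paper leaves implicit, namely that the structural hypothesis on zeroth and first order reactions yields condition \eqref{assump1}.
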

\begin{proof}
The proof is immediate by Lemma \ref{massactionassump} and Theorem \ref{mainthm1}. 
\end{proof}

\vspace{.4in}

\noindent {\Large \textbf{Acknowledgments}}\\

\noindent We gratefully acknowledge grant support from  the Army Research Office via grant  W911NF-18-1-0324.

\bibliographystyle{plain}
\bibliography{NonLip}

\end{document}